\colorlet{darkred}{red!80!black}
\colorlet{darkblue}{blue!80!black}
\colorlet{darkgreen}{green!60!black}
\theoremstyle{plain}
\newtheorem{thm}{Theorem}
\theoremstyle{definition}
\newtheorem{defn}[thm]{Definition}
\theoremstyle{remark}
\theoremstyle{plain}
\newtheorem{cor}[thm]{Corollary}
\newtheorem{observation}[thm]{Observation}
\theoremstyle{remark}
\newtheorem{rem}[thm]{Remark}
\theoremstyle{plain}
\newtheorem{lem}[thm]{Lemma}
\theoremstyle{plain}
\theoremstyle{plain}
\newcommand{\W}[2]{\mathcal W_{#1}(#2)}
\newcommand{\Hb}[1]{H_{B}(#1)}
\newcommand{\Hr}[1]{H_{R}(#1)}
\newcommand{\0}{\ensuremath\mathtt{wlk}}
\newcommand{\1}{\ensuremath\mathtt{trl}}
\newcommand{\2}{\ensuremath\mathtt{pth}}
\newcommand{\3}{\ensuremath\mathtt{ind}}
\newcommand{\4}{\ensuremath\mathtt{iso}}
\renewcommand{\S}[2]{\xrightleftharpoons[{#1}]{#2} }
\newcommandx\s[1][usedefault, addprefix=\global, 1=]{\underset{#1}{arrow}}%
\def\mpfile#1#2{\includegraphics{#1#2.pdf}}
\title{Shifting paths to avoidable ones}
\author{
Vladimir Gurvich\thanks{National Research University Higher School of Economics, \texttt{vladimir.gurvich@gmail.com}.}\and
Matjaž Krnc\thanks{FAMNIT, University of Primorska, \texttt{matjaz.krnc@upr.si}. Corresponding author.}\and
Martin Milanič\thanks{FAMNIT and IAM, University of Primorska, \texttt{martin.milanic@upr.si}.}\and
Mikhail Vyalyi\thanks{National Research University Higher School of Economics; Moscow Institute of Physics and Technology; Federal Research Center “Computer Science and Control” of the Russian Academy of Science, \texttt{vyalyi@gmail.com}.}}
\begin{document}
\maketitle

\begin{abstract}
An extension of an induced path $P$ in a graph $G$ is an induced path $P'$ such that deleting the endpoints of $P'$ results in $P$. An induced path in a graph is said to be avoidable if each of its extensions is contained in an induced cycle. In 2019, Beisegel, Chudovsky, Gurvich, Milanič, and Servatius conjectured that every graph that contains an induced $k$-vertex path also contains an avoidable induced path of the same length, and proved the result for $k = 2$.
The case $k = 1$ was known much earlier, due to a work of Ohtsuki, Cheung, and Fujisawa in 1976.
The conjecture was proved for all $k$ in 2020 by Bonamy, Defrain, Hatzel, and Thiebaut.
In the present paper, using a similar approach, we strengthen their result
from a reconfiguration point of view. Namely, we show that in every graph, each induced path can be transformed to an avoidable one by a sequence of shifts,
where two induced $k$-vertex paths are shifts of each other if their union is an induced path with $k+1$ vertices.
We also obtain analogous results for not necessarily induced paths and for walks.
In contrast, the statement cannot be extended to trails or to isometric paths.

\smallskip
{\bf Keywords:} walk, trail, path, induced path, isometric path,
closed walk, cycle, avoidable walk, shifting, reconfiguration

\smallskip
{\bf MSC codes (2020):} 05C38 (primary), 05C12, 05C05, 05C76 (secondary)
\end{abstract}

\section{Introduction}

All graphs considered in this paper will be finite, undirected, and may have loops and multiple edges, unless stated otherwise (in which case the graph will be referred to as a simple graph). We consider five types of walks in graphs: general walks, trails, paths, induced paths, and isometric paths. We follow the terminology used in~\cite{west2001introduction}. Given a non-negative integer $\ell$, a \emph{$v_{0},v_{\ell}$-walk of length $\ell$} in a graph $G$ is a sequence $(v_{0},e_{1},v_{1},\ldots,e_{\ell},v_{\ell})$, where $v_{0},\ldots,v_{\ell}\in V(G)$, $e_{1},\ldots,e_{\ell}\in E(G)$, and for all $i\in\{1,\ldots,\ell\}$ edge $e_{i}$ has endpoints $v_{i-1}$ and $v_{i}$.
If $v_{0}=v_{\ell}$, the walk is said to be \emph{closed}.
A walk in which all edges (resp.~vertices) are distinct is a \emph{trail} (resp.~a \emph{path}) in $G$.

A subgraph $H$ of a graph $G$ is an \emph{induced} subgraph of $G$ if the set of edges of $H$ is exactly the set of edges of $G$ having both endpoints in $V(H)$. The \emph{distance} between two vertices $u$ and $v$ in a graph $G$ is denoted by $d_{G}(u,v)$ and defined as the length of a shortest $u,v$-path in $G$ (or $\infty$
if there is no $u,v$-path in $G$).
A subgraph $H$ of $G$ is said to be \emph{isometric in $G$} if $d_{H}(u,v)=d_{G}(u,v)$ for every two vertices $u,v\in V(H)$.

Note that a path $P$ in a graph $G$ can be viewed as a subgraph of $G$ (with a pair of mutually inverse paths yielding the same subgraph).
In particular, we say that a path in $G$ is \emph{induced} if the corresponding subgraph is induced in $G$, and \emph{isometric} if the corresponding subgraph is isometric in $G$.
For a positive integer $k$ we denote by $P_{k}$ the graph corresponding to a $k$-vertex path (without a host graph $G$), that is,
% the path of length $\ell=k-1$. In other words, this is
the graph with $k$ vertices $v_1,\ldots, v_k$
and $\ell=k-1$ edges $\{\{v_{i},v_{i+1}\}\mid i\in \{1,\ldots,\ell\}\}$.

\subsection{Five types of walks}

We consider the following five types of walks:
\begin{table}[!h]
\centerline{\begin{tabular}{|c|c|c|c|c|c|}
\hline
$t$ & $\0$ & $\1$ & $\2$ & $\3$ & $\4$\\
\hline
walk of type $t$ &
walk & trail& path & induced path & isometric path\\
\hline
\end{tabular}
}
\end{table}

{For a graph $G$ and $t\in \{\0,\1,\2,\3,\4\}$, a \emph{$t$-walk} in $G$ is a walk in $G$ of type $t$.
We denote the set of all $t$-walks in $G$ by $\W {t}G$.}
Note that
$$\W {\0}G\supseteq\W {\1}G\supseteq\W {\2}G\supseteq\W {\3}G\supseteq\W {\4}G.$$
Moreover, for $k\in\{0,1\}$, equalities hold if we restrict ourselves to walks of length $k$.
(Note however that for $k=1$ the graph should not contain loops.)

\begin{sloppypar}
\begin{defn}[{Extension of a $t$-walk}]
Let $t\in\{\0,\1,\2,\3,\4\}$ and let $W,W'$ be two {\hbox{$t$-walks}} in a graph $G$.
Let $W=(v_{1},e_{1},v_{2},\ldots,e_{k-1},v_{k})$
for some vertices $v_{1},\ldots, v_k\in V(G)$ and edges $e_{1},\ldots, e_{k-1}\in E(G)$.
We say that $W'$ is a \emph{$t$-extension} of $W$
if $W'=(v_{0},e_{0},v_{1},e_1,\ldots,e_{k-1},v_{k}, e_k, v_{k+1})$ for some vertices $v_0,v_{k+1}\in V(G)$ and edges
$e_0,e_k\in E(G)$.
\end{defn}
\end{sloppypar}

A vertex $v$ in a graph $G$ is said to be \emph{simplicial} if its neighborhood forms a clique.
Note that $v\in V(G)$ is simplicial if and only if the corresponding one-vertex induced path $(v)\in\W \3G$ has no $\3$-extension.
{Among other things, this concept is generalized in the following definition.}

{
\begin{defn}[Simplicial, closable, and avoidable $t$-walk]
Let $t\in\{\0,\1,\2,\3,\4\}$ and let $W$ a be a $t$-walk in a graph $G$. We say that $W$ is:
\begin{itemize}
\item \emph{$t$-simplicial} if it has no $t$-extension,
\item \emph{$t$-closable} if it is a subwalk of a closed $t$-walk in $G$,
\item \emph{$t$-avoidable} in $G$ if every $t$-extension of $W$ is $t$-closable.
\end{itemize}
In particular, every $t$-simplicial $t$-walk is $t$-avoidable.
\end{defn}}

\begin{defn}[Shift of a {$t$-walk}]
Let $t\in\{\0,\1,\2,\3,\4\}$ and {$W$ be a $t$-walk in $G$} having at least one edge.
Let $W=(v_{0},e_{1},v_{1},\ldots,e_{k},v_{k})$ for some $k\ge 1$, vertices $v_0,\ldots, v_k\in V(G)$, and edges
$e_1,\ldots,e_k\in E(G)$.
We say that {$t$}-walks
$W'=(v_{0},e_{1},v_{1},\ldots,v_{k-1})$
and
$W''=(v_{1},\ldots,v_{k-1},e_k,v_k)$
are {$t$-}\emph{shifts} of each other in $G$.
\end{defn}

Furthermore, given two $t$-walks $W$ and $W'$ in $G$, we say that $W$ \emph{can be $t$-shifted in $G$ to} $W'$ if there exists a sequence of $t$-walks $W=W_{0},W_{1},\ldots,W_{p}=W'$ in $G$ such that
for all $j\in\{1,\ldots,p\}$ we have $W_{j}\in\W tG$ and $W_{j}$
is a {$t$-shift} of $W_{j-1}$ in $G$.
Note that $p=0$ is allowed (in which case $W=W'$). We write $W\S{G}{t}W'$ if $W$ can be $t$-shifted to $W'$
in $G$. Note that for every graph $G$, the relation $\S{G}{t}$ is an equivalence relation on the set $\W tG$.
Whenever for some graph $G$ the type $t$ of walks under consideration is clear from context, {we just write $\S{G}{~}$ and talk about ``shifts'' instead of ``$t$-shifts'', about ``extensions of an induced path'' instead of ``$\3$-extensions of an $\3$-walk'', etc.}

\subsection{Main results}
Our main result is given by the following theorem.

\begin{thm}
\label{thm:main} Every walk, path, or induced path in a graph
can be shifted to an avoidable one.
\end{thm}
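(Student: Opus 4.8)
For $t=\0$ there is nothing to do: any walk $W$ is a subwalk of the closed walk obtained by concatenating $W$ with its reverse, so $W$---and hence every $\0$-extension of $W$---is $\0$-closable, i.e.\ $W$ is already $\0$-avoidable. So fix $t\in\{\2,\3\}$, a $t$-walk $P$ on $k$ vertices in $G$, and let $\mathcal C=\{Q\in\W{t}{G}:P\S{G}{t}Q\}$ be its shift-class; since $\S{G}{t}$ is an equivalence relation, it suffices to find an avoidable member of $\mathcal C$. Suppose toward a contradiction that every $Q\in\mathcal C$ is non-avoidable. Then each $Q=q_1\cdots q_k\in\mathcal C$ has an \emph{escaping} $t$-extension $Q^+=q_0q_1\cdots q_kq_{k+1}$: one lying on no cycle (if $t=\2$), resp.\ on no induced cycle (if $t=\3$). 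By the standard fact that a path lies on a cycle---resp.\ an induced path on an induced cycle---precisely when its two ends are joined by a path avoiding its interior, resp.\ avoiding the closed neighborhood of its interior, ``escaping'' means that $q_0$ and $q_{k+1}$ lie in different components of the graph $H_Q$ obtained from $G$ by deleting $q_1,\dots,q_k$ together with (when $t=\3$) their remaining neighbors, and then reinserting $q_0$ and $q_{k+1}$. Write $D(Q,Q^+)$ for the component of $q_0$ in $H_Q$; thus $q_{k+1}\notin D(Q,Q^+)$.

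The plan is a descent on (a suitable lexicographic refinement of) $|D(Q,Q^+)|$. Choose a pair $(Q,Q^+)$, with $Q\in\mathcal C$ and $Q^+$ escaping, minimizing $|D(Q,Q^+)|$, and set $D:=D(Q,Q^+)$. Apply to $Q$ the shift that deletes the far end $q_k$ and prepends the near extension vertex $q_0$, namely $Q':=q_0q_1\cdots q_{k-1}$; since $q_0q_1\cdots q_{k-1}$ is a subpath of $Q^+$, this $Q'$ is a legal $t$-walk and a $t$-shift of $Q$, so $Q'\in\mathcal C$ and $Q'$ is non-avoidable. I would then try to exhibit an escaping extension of $Q'$ of the shape $\widetilde Q=r_0\,q_0\,q_1\cdots q_{k-1}\,q_k$: note that $q_k$ is always an admissible right extension vertex for $Q'$ (it sees $q_{k-1}$ but, because $Q$ is a path resp.\ $Q^+$ is induced, none of $q_0,\dots,q_{k-2}$), so only a left vertex $r_0\in N(q_0)$ must be produced, and one lying in $D\setminus\{q_0\}$ exists whenever $|D|\ge2$. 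The target is $D(Q',\widetilde Q)\subsetneq D$: replacing the blocked set ``$q_1,\dots,q_k$ (with neighbors)'' by ``$q_0,\dots,q_{k-1}$ (with neighbors)'' removes from the $D$-side of $H_Q$ at least the vertex $q_0$, whereas the vertices it restores sit around $q_k$, on the far side of $Q$, and cannot reach $D$ without meeting one of $q_1,\dots,q_{k-1}$. Any such pair contradicts the minimal choice, so $\mathcal C$ does contain an avoidable walk. The degenerate situation $|D|=1$---every neighbor of $q_0$ already sees $Q$---should be ruled out by the lexicographic refinement, or treated by shifting in the opposite direction or reversing $P$.

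The step I expect to be the genuine obstacle is this middle one: that an escaping extension of $Q'$ can be taken in the form $\widetilde Q$ above \emph{and} has its $q_0$-component contained in $D$. The subtlety is that the constructed $\widetilde Q$ need not itself be escaping---$r_0$ may reach $q_k$ along a shortest path whose penultimate vertex lies outside $H_Q$---so one has to vary the choice of $r_0$, or argue that in that case $Q$ already had an escaping extension with a strictly smaller component, or design the lexicographic refinement to preclude it; this bookkeeping, together with the $|D|=1$ base case, is where the real work sits. Finally, the argument rests on closing walks up by backtracking (for $t=\0$) and on shifting freely within an equivalence class (for $t\in\{\2,\3\}$); neither device adapts to trails or to isometric paths, in line with the negative results announced earlier.
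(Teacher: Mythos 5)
Your treatment of walks coincides with the paper's Observation~\ref{obs:walks-1} and is fine, and framing the problem as ``find one avoidable member of the shift-class'' is the right reduction. For paths and induced paths, however, the proposal is a plan with a hole exactly where the theorem is hard, and you flag the hole yourself: you never establish that the candidate extension $\widetilde Q=r_0q_0q_1\cdots q_{k-1}q_k$ of the shifted path $Q'$ is escaping, nor that its component $D(Q',\widetilde Q)$ is strictly contained in $D$. This is not routine bookkeeping. A path from $r_0$ to $q_k$ in $H_{Q'}$ can indeed leave $H_Q$, but only through a vertex $z\in N(q_k)\setminus N[\{q_0,\dots,q_{k-1}\}]$ that has a neighbour in $D$; the existence of such a $z$ only tells you that the \emph{other} extension $q_0Qz$ of $Q$ is closable, which contradicts nothing about the chosen escaping extension $Q^{+}$. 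If $\widetilde Q$ fails to be escaping, the minimality of $|D|$ gives you no handle on whatever escaping extension $Q'$ actually possesses: its end vertices need not be $r_0$ and $q_k$, and the component you would have to compare sits at an end of a different path, so the induction does not close. The degenerate case $|D|=1$ (where no $r_0$ exists) and the asymmetry of always measuring the component at the left end of an extension --- an extension can be reversed, and a shift can be performed at either end --- are further unhandled points that a vague appeal to ``a lexicographic refinement'' does not resolve.

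For comparison, the paper does not run a descent on component sizes. For induced paths it follows Bonamy, Defrain, Hatzel, and Thiebaut: an induction on the \emph{order of the graph}, organized through the two-level properties $\Hr{G,k,v}$ and $\Hb{G,k}$, in which the decisive move --- once a non-closable extension $xQ'y$ with $x\in N(u)$ appears --- is to contract the edge $ux$ and recurse in the smaller graph $G/_{ux\to u'}$; Lemma~\ref{lem:contraction} (avoidability lifts back through the contraction) is precisely the device that replaces your missing descent step. Ordinary paths are then handled separately, either by passing to the line graph and invoking the induced-path case, or by a direct DFS-tree argument, rather than by the same descent. To salvage your approach you would essentially need to rediscover the contraction trick or something equally strong, so as written the argument does not go through for $t\in\{\2,\3\}$.
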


We prove Theorem~\ref{thm:main} in parts.
The statement for walks follows from Observation~\ref{obs:walks-1} in Section~\ref{sec:walks}.
The statements for induced paths and paths are Theorems~\ref{thm:avoidable-induced}~and~\ref{thm:paths} in Sections~\ref{sec:induced-paths} and~\ref{sec:paths}, respectively.

\begin{cor}
\label{cor:main} For every non-negative integer $\ell$
%and every type $t\in \{\0,\2,\3\}$,
every graph:
\begin{itemize}
\item[\fbox{$\0$}] either contains no walk of length $\ell$, or contains an avoidable
walk of length~$\ell$;
\item[\fbox{$\2$}] either contains no path of length $\ell$, or contains an avoidable
path of length~$\ell$;
\item[\fbox{$\3$}] either contains no induced path of length $\ell$, or contains an avoidable
induced path of length~$\ell$.
\end{itemize}
\end{cor}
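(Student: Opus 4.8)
The plan is to deduce Corollary~\ref{cor:main} directly from Theorem~\ref{thm:main}, the only real content being a bookkeeping observation about lengths. First I would record that a $t$-shift preserves the length of a walk: in the definition of shift, the walk $W=(v_{0},e_{1},v_{1},\ldots,e_{k},v_{k})$ has length $k$, while both of its shifts $W'=(v_{0},e_{1},v_{1},\ldots,v_{k-1})$ and $W''=(v_{1},\ldots,v_{k-1},e_{k},v_{k})$ have length $k-1$; hence any two $t$-walks that are $t$-shifts of each other have the same length. Since $\S{G}{t}$ is generated by iterating the shift relation (finitely many times, with the empty sequence allowed), it follows that $W\S{G}{t}W'$ implies that $W$ and $W'$ have equal length.

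With this in hand, I would fix $t\in\{\0,\2,\3\}$ and a non-negative integer $\ell$. If $G$ has no $t$-walk of length $\ell$, the first alternative of the corresponding item of Corollary~\ref{cor:main} holds and there is nothing to prove. Otherwise I pick any $t$-walk $W$ in $G$ of length $\ell$ and apply the relevant part of Theorem~\ref{thm:main}: Observation~\ref{obs:walks-1} when $t=\0$, Theorem~\ref{thm:paths} when $t=\2$, and Theorem~\ref{thm:avoidable-induced} when $t=\3$. This yields a $t$-avoidable $t$-walk $W'$ in $G$ with $W\S{G}{t}W'$, and by the length-preservation observation $W'$ has length $\ell$. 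Thus $G$ contains an avoidable $t$-walk of length $\ell$, which is exactly the second alternative.

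Finally I would check that the small values $\ell\in\{0,1\}$ cause no difficulty: the shift relation is defined only on walks having at least one edge, but a $t$-walk of length $\ell\geq 1$ has $t$-shifts of length $\ell-1\geq 0$, and the definition of $\S{G}{t}$ explicitly allows the trivial ($p=0$) sequence, so the preceding argument applies verbatim for every $\ell\geq 0$ (including $\ell=0$, where ``$G$ has a walk of length $\ell$'' just means $V(G)\neq\varnothing$). The only subtlety anywhere in the proof is the length-preservation of shifts; once that is noted, the corollary is an immediate consequence of Theorem~\ref{thm:main}, so I do not expect any genuine obstacle here.
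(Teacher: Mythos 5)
Your proposal is correct and matches the paper's (implicit) derivation: the corollary is stated without a separate proof precisely because it follows from Theorem~\ref{thm:main} once one notes that the two shifts $W'$ and $W''$ of a walk $W$ of length $k$ both have length $k-1$, so the relation $\S{G}{t}$ preserves length. Your explicit handling of the length bookkeeping and of the small cases $\ell\in\{0,1\}$ is exactly the intended argument.
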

Note that every graph with at least one edge contains walks of all non-negative lengths.

On the other hand, we show that statements of Theorem~\ref{thm:main} and Corollary~\ref{cor:main}
do not extend to the cases of trails and of isometric paths.

\subsection{Related work}

The most important case in Corollary~\ref{cor:main} is the case of induced paths.
The corresponding statement was conjectured (and proved for   $\ell =  1$)  by Beisegel et al.~in~\cite{BCGMS19}.
We also refer to~\cite{BCGMS19} for motivation and more details.
For $\ell=0$ the result is much older; it follows from a work of Ohtsuki et al.~\cite{OCF1976}, see also~\cite{RTL1976}.
Chv\'atal et al.~\cite{MR1927566} proved the conjecture for graphs not containing induced cycles of length at least $\ell+4$ (in which case any avoidable induced path of length $\ell$ is simplicial).
Bonamy et al.~\cite{BDHT19} recently proved the conjecture in general.
Using a similar approach we strengthen their result further in Theorem~\ref{thm:main} (the case of induced paths).

Our results can be stated in terms of combinatorial reconfiguration.
We consider a reachability problem in which the states are walks of a fixed type and length in a graph, the transformations are corresponding shifts, and the target set consists of avoidable walks of the same type and length.
Several other results on reconfiguration of paths are known in the literature.
For example, Demaine et al.~\cite{MR3992972} proved that the reachability problem for shifting paths (``Given two paths in a graph, can one be transformed into the other one by a sequence of shifts?'') is \textsf{PSPACE}-complete.
For shortest $u,v$-paths where each transformation consists in changing a single vertex, the same result was obtained by Bonsma~\cite{MR3122210}.

\subsection{Preliminary definitions and notation}\label{sec:prelim}

Given a vertex $v\in V(G)$ we use standard notations; $N(v)$ and $N[v]$ stand for its open and closed neighborhood, respectively, and $G-v$ denotes the graph obtained from $G$ by removing a vertex $v$. The \emph{order} of a graph $G$ is the number of vertices in $G$. We denote the graph obtained from $G$ by contracting an edge $uv\in E(G)$ by $G/_{uv}$. After such a contraction, it will sometimes be useful to label the newly obtained vertex. We do this by writing $G/_{uv\to u'}$, where $u'$ is the new vertex corresponding to the contracted edge $uv$ in $G$.

Given two graphs $G$ and $H$, their  \emph{Cartesian product} $G \square H$ is the graph with vertex set  $V(G) \times V(H)$, where two vertices $(u,u')$ and $(v,v')$ are adjacent if and only if either
\begin{enumerate*}[label=(\roman*)]
    \item $u = v$ and $u'$ is adjacent to $v'$ in $H$, or
    \item $u' = v'$ and $u$ is adjacent to $v$ in $G$.
\end{enumerate*}

\subsection{Structure of the paper}

In Section~\ref{sec:trails}, we give examples of graphs containing trails of various lengths that do not contain any avoidable trails of the same length. Similar examples for isometric paths are constructed in Section~\ref{sec:isometric-paths}. In Section~\ref{sec:induced-paths} we derive our most important result, stating that every induced path in a graph can be shifted to an avoidable one. The analogous result for paths is proved in two different ways in Section~\ref{sec:paths}. For completeness, we also include the corresponding easy observations about walks in Section~\ref{sec:walks}. We conclude with some open problems in Section~\ref{sec:open}.

\section{Trails}\label{sec:trails}

In this section we will show that Theorem \ref{thm:main} does not extend to the case of trails.
We construct several counterexamples for various lengths $\ell$ of a trail.

For $\ell = 0$ consider the graph $G$ consisting of two vertices $u$ and $v$ joined by an edge, and having a loop at each of $u$ and $v$.  Then, every trail of length $0$ has a unique extension in $G$ (up to reversing the extension) and this extension is not closable. Thus no trail of length $0$ is avoidable in $G$.

Now consider an odd integer $\ell\ge 1$ and let $G_\ell$ be the graph consisting of two vertices and $\ell+2$ parallel edges between them. Then, up to isomorphism there exists a unique trail of length $\ell$ in  $G$. Furthermore, this trail has a unique extension in $G$ and this extension cannot be closed.

For $\ell=2$ consider the graph $G=K_4$. It is easily seen that up to isomorphism
there exists a unique trail of length $\ell$ in $G$.
Furthermore, this trail has exactly three extensions (see Fig.~\ref{pic:K4}),
two of which (those depicted in Fig.~\ref{pic:K4}(b,c)) cannot be closed.
\begin{figure}[!h]
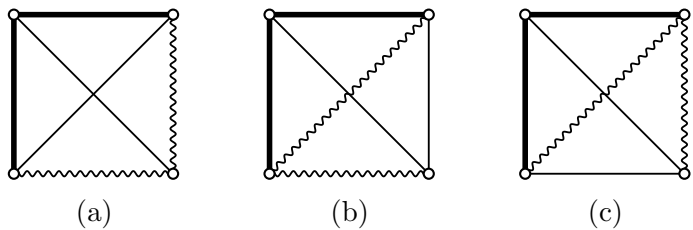

\centering
\begin{tabular}{c@{\hskip1cm}c@{\hskip1cm}c}
  \mpfile{graphs}{51}&\mpfile{graphs}{52}&\mpfile{graphs}{53}
  \\
(a) & (b) & (c)
\end{tabular}
\caption{Thick lines: edges of the trail; wavy lines: edges of an
  extension; ordinary lines: the remaining edges of the graph}\label{pic:K4}
\end{figure}

To get further examples in the class of simple graphs, consider a positive integer $j$, let $\ell = 4j-1$, and let $G$ be the complete bipartite graph $K_{2, 2j+1}$. Then again, up to isomorphism there exists a unique trail of length $\ell$ in $G$ and
its unique extension in $G$ cannot be closed.

\section{Isometric paths}\label{sec:isometric-paths}

This case is not covered by our main theorem (Theorem~\ref{thm:main}), as the following result shows.

\begin{thm}
\label{thm:isometric}For every non-negative integer $\ell$, there exists
a graph $G_{\ell}$ that contains an isometric path of length $\ell$ but
contains no avoidable isometric path of length $\ell$.
\end{thm}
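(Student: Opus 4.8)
The plan is to construct, for each $\ell$, a graph $G_\ell$ in which every isometric path of length $\ell$ admits an isometric extension that cannot be closed (i.e.\ that is not contained in any isometric cycle), thereby witnessing the failure of avoidability. For $\ell = 0$ a single vertex with a pendant — in fact any graph with a simplicial vertex whose unique extension is not closable — does the job; more robustly, one can take a graph where there are isometric paths of length $0$ but no isometric cycle through any edge, e.g.\ a tree, since in a tree every path is isometric and no cycle exists at all, so no edge is $\4$-closable. Actually a tree handles \emph{all} lengths $\ell$ simultaneously: in a path $P_{n}$ with $n$ large, the length-$\ell$ isometric paths exist, and \emph{every} extension of one is again an isometric path in a graph with no cycles, hence not $\4$-closable. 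So the cleanest construction is simply $G_\ell = P_{\ell+3}$ (or any sufficiently long path), and the whole theorem reduces to the trivial observation that acyclic graphs contain no closed walks of positive length, so no isometric path in them is avoidable, while they do contain isometric paths of every length up to their diameter.

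The steps, in order, are: (1) fix $G_\ell$ to be the path graph on $\ell+3$ vertices (chosen large enough to contain a length-$\ell$ subpath with room for an extension on at least one side — in fact $P_{\ell+1}$ already contains a length-$\ell$ isometric path, and any longer path guarantees the existence of an extension, but since non-avoidability only requires \emph{one} non-closable extension we just need the length-$\ell$ path to exist); (2) observe that in a path graph every path is isometric, so $\W{\4}{G_\ell}$ contains a path of length $\ell$ whenever $\ell \le \ell+2$; (3) observe that $G_\ell$ has no cycle, so it has no closed walk of positive length, hence no isometric path of positive length is a subwalk of a closed $\4$-walk, i.e.\ nothing of positive length is $\4$-closable; (4) conclude that for $\ell \ge 1$ every $\4$-extension of our path (having length $\ell+2 \ge 1$) fails to be closable, so the path is not avoidable; (5) handle $\ell = 0$ separately, noting that the length-$0$ extension has length $2 \ge 1$ and again is not closable, or alternatively just remark the $\ell=0$ case with the two-vertex-plus-loop style example if loops are to be avoided — but since $P_3$ works with no loops, a uniform treatment via paths suffices.

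I expect essentially no obstacle here: the only subtlety is making sure the definitions line up — specifically that ``avoidable'' requires \emph{every} extension to be closable, so a single bad extension suffices, and that ``closable'' is defined via closed \emph{$\4$-walks}, which in an acyclic graph simply do not exist with positive length. One should double-check the degenerate case where the length-$\ell$ path has no extension at all inside $G_\ell$ (it would then be $\4$-simplicial, hence avoidable), which is why $G_\ell$ must be chosen long enough that the chosen path admits at least one extension; taking $G_\ell = P_{\ell+3}$ and the middle length-$\ell$ subpath guarantees an extension on each side. If a more interesting (e.g.\ $2$-connected, or bridgeless) example is desired in the actual paper — to parallel the trail counterexamples in Section~\ref{sec:trails}, which use highly connected graphs — then the construction must be replaced by something like an appropriate ``theta-like'' or grid-like graph in which long isometric paths force an extension that overshoots every isometric cycle; designing that family and verifying the distance conditions would be the genuinely technical part, but for merely proving the stated existence theorem the acyclic construction is complete and immediate.
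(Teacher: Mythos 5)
Your construction does not prove the theorem, because you have misread what must be shown. The statement requires that $G_\ell$ \emph{contains no avoidable isometric path of length $\ell$}, i.e.\ that \emph{every} isometric path of length $\ell$ in $G_\ell$ fails to be avoidable. In $G_\ell=P_{\ell+3}=v_1v_2\dots v_{\ell+3}$ the subpath $v_1\dots v_{\ell+1}$ is an isometric path of length $\ell$ with no extension at all (an extension must add a new vertex at \emph{each} end, and $v_1$ is a leaf), so it is simplicial and hence, by the paper's definition, vacuously avoidable. Thus $P_{\ell+3}$ does contain an avoidable isometric path of length $\ell$ and is not a counterexample. You noticed exactly this danger yourself (``it would then be $\4$-simplicial, hence avoidable'') but guarded only your one chosen middle subpath against it, whereas the quantifier in the theorem forces you to guard every length-$\ell$ isometric path simultaneously. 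No tree can do this: any tree containing a path of length $\ell$ contains one ending at a leaf, which is simplicial and therefore avoidable. The same objection applies to your remark that ``the acyclic construction is complete and immediate'' — it is neither.

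The genuinely technical work you deferred to a hypothetical ``more interesting example'' is in fact the entire content of the proof. One needs a graph in which every isometric path of length $\ell$ \emph{has} an isometric extension (ruling out simplicial paths, hence ruling out trees and anything with low-degree boundary behaviour) and in which some such extension is never contained in an isometric cycle. The paper achieves this with the wheel $W_6$ for $\ell=0$ and with the cylinders $P_n\,\square\,C_n$ (for odd $n>2\ell+4$) for $\ell\ge 1$, where a careful analysis shows that the only isometric cycles are the $4$-cycles and the fibre copies of $C_n$, while every length-$\ell$ isometric path extends to an isometric path that is non-constant in the first coordinate and so cannot lie in either kind of cycle. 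Your proposal as it stands has a fatal gap and would need to be replaced by a construction of this sort.
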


\begin{proof}
For $\ell=0$, let $G_{0}=W_6$ be the wheel on 7 vertices, that is, the
graph obtained from the cycle $C_{6}$ by adding a universal vertex
(see Figure \ref{pic:iso-graphs}(a)). We claim that every vertex of $G_{0}$ is an isometric path of length 0 that is not avoidable. Indeed, since every vertex extends to an isometric
path of length 2, it is enough to show that no isometric path of length
2 in $G_{0}$ is closable. However, this follows from the fact that
$G_{0}$ contains a unique induced cycle of length greater than $3$,
namely the $C_{6}$, which is not isometric.

\begin{figure}[!h]
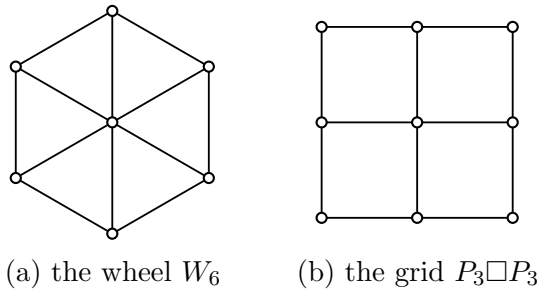

\centering
\begin{tabular}{c@{\hskip1cm}c}
  \mpfile{graphs}1&\raisebox{6pt}{\mpfile{graphs}2}\\
(a) the wheel $W_6$& (b) the grid $P_3\Box P_3$
\end{tabular}
\caption{The cases $\ell=0$ and $\ell=1$}\label{pic:iso-graphs}
\end{figure}

For $\ell=1$ we give two examples: a small specific example and a similar one that is the smallest member of an infinite family of examples for all $\ell\ge 1$. The first one is the graph $G_{1}\cong P_{3}\Box P_{3}$ (see Figure \ref{pic:iso-graphs}(b)).
In this case every edge of $G_{1}$ is an isometric path of length 1 that is not avoidable.
Indeed, since every edge extends to an isometric path of length 3 (see Fig.~\ref{pic:P3P3}), it is enough to show that no isometric path of length 3 in $G_{1}$ is closable. However, this follows from the fact that $G_{1}$ contains a unique induced cycle of length greater than $4$. This cycle is of length 8 and is not isometric.

\begin{figure}
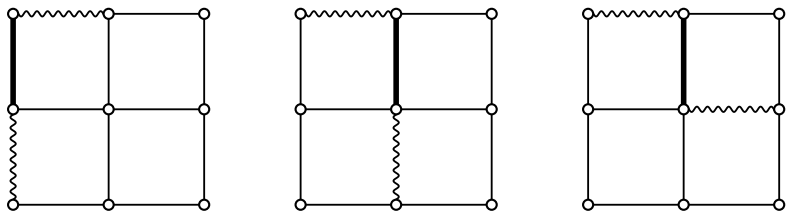

\centering
\begin{tabular}{c@{\hskip1cm}c@{\hskip1cm}c}
  \mpfile{graphs}{21}&\mpfile{graphs}{22}&\mpfile{graphs}{23}
\end{tabular}
\caption{Isometric extensions of edges in $G_1$}\label{pic:P3P3}
\end{figure}

For $\ell\ge1$, let $G_{\ell}$ be any graph of the form $P_{n}\Box C_{n}$
where $n$ is an odd integer greater then $2\ell+4$. We denote vertices
of each factor by numbers from $[n]$, so every vertex
of $G_{\ell}$ is of the form $(i,j)$ for $i,j\in[n]$.
We start by characterizing sufficiently short isometric paths in $G_{\ell}$.
The following claim is implicit in \cite[Chapter 12]{IKR08}.

\medskip
\noindent \textbf{Claim 1.}
Let $P=(v_{0},\ldots,v_{k})$
with $k\le \ell+2$ be a path in $G_{\ell}$. Then $P$ is isometric in $G_{\ell}$ if and only if
% for each coordinate and each $i\in[n]$, this coordinate can take value $i$ only in successive vertices of $P$.
for both coordinates the following implication holds: if two vertices of $P$ have the same value of the coordinate, then so does every vertex between them.
\begin{proof}
Suppose that $P$ is isometric in $G_{\ell}$. Take two vertices of $P$ with the same value of some coordinate.
Then there exists a unique shortest path between them in $G_{\ell}$, since $k\le \ell+2<n/2$.
So all edges of this path should belong to $P$.

For the opposite direction, let $u$ and $v$ be two vertices in $P$, and let $Q$ be the $u,v$-path contained in $P$.
We want to show that $Q$ is a shortest $u,v$-path in $G_{\ell}$.
Let $X$ and $Y$ denote the sets of values taken by the first and second coordinates of vertices in $Q$, respectively.
Since $\max\{|X|,|Y|\}\le \ell+3$ and $n\ge 2\ell+5$, the subgraph $G'$ of $G_{\ell}$ induced by $X\times Y$ is isometric in $G_{\ell}$.
Furthermore, if $P$ satisfies the condition from the claim, then the same condition holds for $Q$.
This implies that both coordinates are monotone along $Q$, so $Q$ is a shortest $u,v$-path in $G'$.
It follows that $Q$ is also a shortest $u,v$-path in $G_{\ell}$.
\end{proof}
To complete the proof we show that every isometric path of length
$\ell$ in $G_{\ell}$ has an extension that is not closable. Claim~1
implies that each isometric path $P$ of length $\ell$ has an isometric extension $Q$ that is not constant in the first coordinate (see Fig.~\ref{pic:PnCn}). Such a path $Q$ can only be contained in isometric cycles of length at least $2\ell+4>4$.
We complete the proof by the following claim.

\medskip
\noindent \textbf{Claim 2.} The only isometric cycles in $G_{\ell}$
are the cycles of length $n$ that are constant in the first coordinate
and the cycles of length 4.
\begin{proof}
It is easily seen that mentioned cycles are isometric.

\begin{figure}
\centering
\begin{minipage}{0.45\textwidth}
\centering
\begin{tabular}{c@{\hskip1.5cm}c}
  \mpfile{graphs}{32}&\mpfile{graphs}{31}\\
(a) $x=1$ & (b) $x > 1$
\end{tabular}
\caption{Two relevant cases for constructing path $Q$ in the case when the vertices of $P$ agree in the first coordinate, with common value $x$.}\label{pic:PnCn}
\end{minipage}\quad\quad\quad\quad
\begin{minipage}{0.45\textwidth}
  \centering
  \mpfile{graphs}{4}
  \caption{Situation in the proof of Claim 2.}\label{pic:C4}
  \vspace*{19.8pt}
\end{minipage}
\end{figure}

For the converse direction, consider an isometric cycle $C$ in $G_{\ell}$
that is not constant in the first coordinate. We will show that $C$
is of length 4. Let $a\in[n]$ be the maximal value that
appears as the first coordinate of some vertex in $C$. Let $b\in[n]$
be the minimal value such that $\{ (a-1,b),(a,b)\} $ is
an edge of $C$. We may assume w.l.o.g.~that vertices $v_{0},v_{1},\dots$
of $C$ appear in cyclic order so that $v_{0}=(a-1,b)$ and $v_{1}=(a,b)$.

See Fig.~\ref{pic:C4}. Let $v_{i}=(a,c)$ be the vertex of $C$ having first coordinate $a$ such that $i$ is maximized.
Then, $v_{i+1}=(a-1,c)$ by the maximality of $a$ and $i$. Note
also that $i>1$ and hence $c>b$. By Claim~1, $v_{1},\dots,v_{i}$
are the only vertices in $C$ that maximize the first coordinate.
Cycle $C$ contains a shortest $v_{1},v_{i}$-path in $G_{\ell}$.
Since $n$ is odd, such a path is unique. Similarly, the shortest
$v_{0},v_{i+1}$-path in $G_{\ell}$ is contained in $C$, hence $v_{i+2}=(a-1,c-1)$.
Furthermore, $v_{i-1}=(a,c-1)$, which implies that $\{v_{i-1},v_{i+2}\}$
is an edge of $G_{\ell}$, hence it is also an edge of $C$ since $C$
is isometric. Therefore $v_{i+2}=v_{0}$ and $C$ is of length 4.
\end{proof}
This concludes the proof of Theorem \ref{thm:isometric}.
\end{proof}

\begin{rem}
We leave it to the careful reader to explain why our main construction works only for $\ell\ge1$ and for odd $n$, but not for the case $\ell=0$ or for the case when $n$ is even, and also why one could not replace $P_{n}\Box C_{n}$ by $P_{n}\Box P_{n}$ or $C_{n}\Box C_{n}$.
\end{rem}

\section{Induced paths}\label{sec:induced-paths}

The main result of this section is the following theorem, which settles the case of induced paths from Theorem~\ref{thm:main}.

\begin{thm}
\label{thm:avoidable-induced}
Every induced path in a graph $G$ can
be shifted to an avoidable one.
\end{thm}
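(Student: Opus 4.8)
The plan is to follow the strategy of Bonamy, Defrain, Hatzel, and Thiebaut, but track a \emph{sequence of shifts} rather than merely the existence of an avoidable induced path. The natural inductive parameter is the order of the graph. So I would argue by induction on $|V(G)|$, the base case being trivial since in a graph with few vertices every induced path is simplicial, hence avoidable. For the inductive step, fix an induced path $P = (v_1,\ldots,v_k)$ in $G$ that is not already avoidable; I want to produce a shift from $P$ to some avoidable induced path.

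\textbf{The key steps.} First, if $P$ itself is simplicial we are done, so assume $P$ has an extension $P' = (v_0,\ldots,v_{k+1})$ that is not closable, i.e.\ not contained in any induced cycle of $G$. Second, I would look for a vertex $x$ whose removal is ``harmless'': concretely, a vertex $x \notin V(P)$ such that $P$ is still an induced path in $G - x$ and such that avoidability in $G-x$ can be lifted back to $G$. The subtle point is that an induced path which is avoidable in $G-x$ need not be avoidable in $G$, because new extensions through $x$ may appear; this is exactly where the Bonamy et al.\ argument is delicate, and where I expect the main obstacle to lie. Their resolution is to choose $x$ as a vertex in a carefully selected part of the graph --- roughly, a vertex ``far'' from $P$ in a BFS-type layering, or a vertex dominated by another --- so that any extension or induced cycle created in $G$ but not present in $G-x$ can be ``rerouted'' around $x$. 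Third, having fixed such an $x$, apply the induction hypothesis in $G-x$ to obtain a sequence of shifts $P = Q_0, Q_1, \ldots, Q_m = Q$ in $G-x$ with $Q$ avoidable in $G-x$. Fourth --- and this is the new ingredient compared to the existence statement --- I must check that each $Q_i$, being an induced path in $G-x$, is also an induced path in $G$ (true, since $G-x$ is an induced subgraph) and that each shift $Q_{i-1}\to Q_i$, being valid in $G-x$, is valid in $G$ as well (the union being an induced $(k{+}1)$-vertex path in $G-x$, hence in $G$). So the shift sequence transports verbatim to $G$, and $P \S{G}{\3} Q$. Finally, upgrade $Q$ from ``avoidable in $G-x$'' to ``avoidable in $G$'': if $Q$ is already avoidable in $G$ we stop; otherwise $Q$ has a non-closable extension in $G$ that uses $x$, and here I would either argue directly that $Q$ can be shifted one more step (toward $x$ or away from it) into a path to which induction applies with a \emph{smaller} graph, or set up the vertex-choice in step two so that this case simply cannot occur.

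\textbf{Main obstacle.} The crux is the choice of the deleted vertex $x$ and the accompanying structural lemma guaranteeing that avoidability is preserved (or recoverable by one extra shift) when passing from $G-x$ to $G$. In the original paper this is handled via a clever analysis of the neighborhoods of the endpoints of the non-closable extension and a minimal-counterexample / BFS-layer argument; adapting it here requires verifying that every manipulation it performs on induced paths is realized by \emph{shifts}, which is plausible since a shift is exactly the ``elementary move'' of sliding an induced path by one vertex along an induced $P_{k+1}$. I would therefore expect the bulk of the work to be: (i) restating the Bonamy--Defrain--Hatzel--Thiebaut vertex-selection lemma in a form that also yields a short shift sequence rather than a single target path, and (ii) checking the ``lifting'' of avoidability, handling the extra case where a new extension through $x$ appears by performing one more shift and re-invoking induction. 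Everything else --- transporting the shift sequence from $G-x$ to $G$, the base case, and the reduction to the non-simplicial case --- is routine.
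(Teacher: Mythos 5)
Your plan captures the correct outer shell (induction on the order of the graph, transporting a shift sequence from an induced subgraph back to $G$, then repairing avoidability), but it stops exactly where the real work begins, and the repair mechanism you gesture at is not the one that actually closes the argument. The paper does \emph{not} delete a single carefully chosen vertex $x$ and it does not use a BFS-layering. The inductive statement has to be strengthened first: one proves the property $\Hr{G,k,v}$, namely that every induced $k$-vertex path in $G-N[v]$ can be shifted \emph{inside $G-N[v]$} to a path that is avoidable \emph{in $G$}. This two-graph formulation is the whole point — it is what lets one record shifts in a smaller graph while certifying avoidability in the original one — and your single-graph induction hypothesis ``every induced path in $G-x$ can be shifted to a path avoidable in $G-x$'' is too weak to be propagated, precisely because of the lifting problem you yourself identify.

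Concretely, the minimal-counterexample argument runs as follows. If $\Hr{G,k,u}$ fails for some $u$ and some path $Q$ in $G-N[u]$, then by minimality $Q$ shifts in $G-N[u]$ to a path $Q'=q_1\dots q_k$ avoidable in $G-N[u]$ but not in $G$, so $Q'$ has a non-closable extension $xQ'y$; a short case analysis shows exactly one of $x,y$ (say $x$) lies in $N(u)$, since if neither did the extension would live in $G-N[u]$ and be closable there, and if both did then $xQ'yu$ would be an induced cycle. One then performs the single shift $Q'\to Q''=q_2\dots q_k y$ (valid in $G-N[u]$ because $y\notin N[u]$) and — this is the step entirely absent from your proposal — passes to the \emph{contracted} graph $G'=G/_{ux\to u'}$, applies $\Hr{G',k,u'}$ to shift $Q''$ within $G'-N[u']=G-N[\{u,x\}]$ to a path $Q^*$ avoidable in $G'$, and invokes a separate lemma saying that avoidability in $G/_{uv\to u'}$ of a path avoiding $N[u']$ lifts to avoidability in $G$ (induced cycles through $u'$ are expanded by substituting $u$, $v$, or the edge $uv$). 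Without the strengthened hypothesis $\Hr{\cdot}$, the contraction step, and the contraction-lifting lemma, the case ``$Q$ is avoidable in the subgraph but not in $G$'' that you leave open cannot be resolved, so as written the proposal has a genuine gap at its central step.
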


We prove Theorem~\ref{thm:avoidable-induced} by adapting the approach used by Bonamy et al.~\cite{BDHT19} to prove that for every positive integer $k$, every graph that contains an induced $P_{k}$ also contains an avoidable induced $P_{k}$ (case $ \3$ of Corollary~\ref{cor:main}).

We first fix some notation. We denote
a path or a cycle simply by a sequence of vertices, e.g., $P=p_{1}\ldots p_{k}$.
Correspondingly, for such a path $P$ and a vertex $x$ not on $P$ we will denote by $xP$ the sequence $xp_{1}\ldots p_{k}$ (which will typically be a path) and by $Px$ the sequence
$p_{1}\ldots p_{k}x$. Thus, if $P'$ is an extension of $P$, then there exist two vertices $x$ and $y$ not on $P$ such that $xPy$ is an extension of $P$. We often use the fact that for an induced subgraph $G'$ of a graph $G$ and two induced paths
$Q_{1}$ and $Q_{2}$ in $G'$, we have $Q_{1}\S{G}{~}Q_{2}$ whenever $Q_{1}\S{G'}{~}Q_{2}$.

We adapt the approach of~\cite{BDHT19} to shifting.
For a graph $G$ and a positive integer $k$, we say that:
\begin{itemize}
\item property $\Hb{G,k}$ holds if every induced path $P_{k}$ in $G$ can be shifted to an avoidable {induced} path;
\item for a vertex $v\in V(G)$, property $\Hr{G,k,v}$ holds if every induced path $P_{k}$ in $G-N[v]$ can be shifted in $G-N[v]$ to an avoidable {induced} path in $G$;
\item property $\Hr{G,k}$
holds if for every $v\in V(G)$ we have $\Hr{G,k,v}$.
\end{itemize}

\begin{lem}
\label{lem:Hr-Hb}$\Hr{G,k}$ implies $\Hb{G,k}$.
\end{lem}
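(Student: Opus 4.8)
The plan is to use the classical result that every (loopless, at least one vertex) graph has a simplicial vertex — more precisely, every graph $G$ with a non-simplicial vertex still has \emph{some} simplicial vertex, and in fact every graph has a simplicial vertex (this is a special case of the $\ell=0$ result of Ohtsuki et al.\ cited in the introduction, though for this lemma an elementary argument suffices). Fix a graph $G$ and a positive integer $k$, and assume $\Hr{G,k}$ holds; we must show $\Hb{G,k}$. So let $P$ be an arbitrary induced path on $k$ vertices in $G$; we want to shift $P$ in $G$ to an avoidable induced path.

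First I would pick a simplicial vertex $v$ of $G$. The key observation is that $P$ cannot use the vertex $v$ in any ``essential'' way: since $N(v)$ is a clique, $P$ contains at most two vertices of $N[v]$, and if it contains two such vertices they are consecutive on $P$; moreover $v$ itself can appear on $P$ only as an endpoint (an interior occurrence of $v$ would force two distinct non-adjacent neighbours of $v$ on $P$). The idea is then to \emph{shift $P$ away from $N[v]$}: by repeatedly shifting off the endpoint that lies in $N[v]$ (there is at most one such endpoint at any time, since $P$ meets the clique $N[v]$ in at most two consecutive vertices), after at most two shifts we reach an induced $k$-vertex path $P'$ with $P \S{G}{~} P'$ and $V(P') \cap N[v] = \emptyset$, i.e.\ $P'$ is an induced path in $G - N[v]$. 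Here I would need to be a little careful: shifting off an endpoint in $N[v]$ adds a new endpoint on the other side, and I must check this new vertex can be chosen (or is forced) to lie outside $N[v]$; this follows again because $N[v]$ is a clique and the path is induced, so the new endpoint is non-adjacent to the vertex two steps in, which already lies outside $N[v]$, hence the new endpoint is outside $N[v]$ as well — unless $k$ is so small that the path degenerates, a boundary case to handle separately.

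Now I would invoke $\Hr{G,k,v}$: since $P'$ is an induced $P_k$ in $G - N[v]$, property $\Hr{G,k,v}$ gives a sequence of shifts \emph{within} $G - N[v]$ transforming $P'$ into an induced path $P''$ that is avoidable \emph{in $G$}. Since $G - N[v]$ is an induced subgraph of $G$, every shift in $G - N[v]$ is also a shift in $G$ (using the remark in the excerpt that $Q_1 \S{G}{~} Q_2$ whenever $Q_1 \S{G'}{~} Q_2$ for an induced subgraph $G'$), so $P' \S{G}{~} P''$, and concatenating, $P \S{G}{~} P''$ with $P''$ avoidable in $G$. This proves $\Hb{G,k}$.

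The main obstacle I anticipate is the bookkeeping in the shifting-away step: verifying that at every stage the path meets $N[v]$ in at most two consecutive vertices, that the shift used to remove an endpoint in $N[v]$ produces a valid induced $k$-vertex path whose new endpoint is outside $N[v]$, and that only finitely many (indeed at most two) such shifts are needed. This is where the simpliciality of $v$ is used crucially, and it is essentially the only nontrivial content of the lemma; everything after invoking $\Hr{G,k,v}$ is formal.
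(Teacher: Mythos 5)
There is a genuine gap, and it appears right at the start: your plan hinges on choosing a simplicial vertex $v$ of $G$, but not every graph has one (take $G=C_5$: no vertex has a clique neighborhood). The $\ell=0$ result of Ohtsuki et al.\ that you cite guarantees an \emph{avoidable} vertex, not a simplicial one; these coincide only in special classes such as chordal graphs. Since the lemma is stated for arbitrary $G$, the argument cannot get off the ground. A second, independent problem is the ``shift away from $N[v]$'' step even where $v$ exists: a shift that deletes the endpoint $p_1\in N[v]$ requires an extension of $p_2\ldots p_k$ at the $p_k$ end, which need not exist, and your justification that the newly added endpoint $w$ lies outside $N[v]$ (``$w$ is non-adjacent to a vertex already outside $N[v]$'') is a non sequitur --- $w$ may well lie in $N[v]$, since after $p_1$ is removed no other path vertex is in the clique $N[v]$ to force a chord. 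So neither termination in two steps nor termination at all is established.

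The paper's proof sidesteps all of this by not fixing a distinguished vertex in advance. Given the induced path $Q=q_1\ldots q_k$, either $Q$ has no extension, in which case it is simplicial and hence avoidable and we are done; or it has an extension $xQy$. Perform the single shift $Q\rightarrow Q':=q_2\ldots q_k y$. Because $xQy$ is an induced path, $x$ is adjacent to none of $q_2,\ldots,q_k,y$, so $Q'$ is automatically an induced path in $G-N[x]$ --- no iterated ``pushing out of $N[x]$'' is needed. Now apply $\Hr{G,k,x}$ (which holds for \emph{every} vertex, so no existence issue arises) to shift $Q'$ inside $G-N[x]$ to a path avoidable in $G$, and concatenate the shift sequences. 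The part of your write-up after invoking the hypothesis is fine and matches the paper; it is the reduction to a path inside $G-N[v]$ for a suitable $v$ that needs the paper's idea of taking $v=x$ from an extension of $Q$ itself.
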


\begin{proof}
Assume $\Hr{G,k}$ and let $Q=q_{1}\ldots q_{k}$ be an induced $P_{k}$
in $G$. If $Q$ is simplicial, then we are done, so assume that $xQy$
is an extension of $Q$ and define $Q'\coloneqq q_{2}\ldots q_{k}y$.
It is clear that $Q\S{G}{~}Q'$. Furthermore, by $\Hr{G,k,x}$ the path
$Q'$ can be shifted in $G-N[x]$ to a path $Q^{*}$ that
is avoidable in $G$. But then $Q\S{G}{~}Q'\S{{G-N[x]}}{~}Q^{*}$,
and hence $Q\S{G}{~}Q^{*}$. Since $Q$ was arbitrary, this shows $\Hb{G,k}$.
\end{proof}

We need the following result, which is implicit in the proof of \cite[Lemma 15]{BDHT19}.
\begin{lem}
\label{lem:contraction}Let $G$ be a graph, let $uv\in E(G)$, let
$G'\coloneqq G/_{uv\to u'}$ and let $P$ be an induced path in $G'-N[u']$.
Then $P$ is avoidable in $G$ whenever it is avoidable in $G'$.
\end{lem}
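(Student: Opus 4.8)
The plan is to show the contrapositive-style statement by transporting a problematic extension of $P$ in $G$ to a problematic extension of $P$ in $G'$. Assume $P$ is avoidable in $G'$; I want to prove it is avoidable in $G$. So let $R$ be an arbitrary $\3$-extension of $P$ in $G$; write $R = xPy$ for vertices $x,y$ not on $P$. I must show $R$ is $\3$-closable in $G$, i.e.\ that $R$ lies on an induced cycle of $G$. Since $P$ avoids $N[u']$ in $G'$, by construction no vertex of $P$ is $u$, $v$, or adjacent (in $G$) to $u$ or $v$; in particular $P$ is an induced path in $G$ as well, and the only way $R$ can fail to be an induced path of $G'$ is through the vertices $x$ or $y$. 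The first step is therefore a case analysis on how $x$ and $y$ behave with respect to the contracted edge $uv$.

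The key observation is that $\{x,y\} \cap \{u,v\}$ matters, but contraction only causes trouble when \emph{both} $u$ and $v$ appear among $\{x,y\}$ — and this cannot happen, because $x$ and $y$ are nonadjacent (they are the two ends of an induced $P_{k+2}$ with $k \ge 1$ interior vertices between them), while $uv \in E(G)$. If neither $x$ nor $y$ lies in $\{u,v\}$, and moreover $x,y \notin N[u] \cup N[v]$, then $R$ is an induced path of $G'$ avoiding $N[u']$; by avoidability of $P$ in $G'$ it lies on an induced cycle $C$ of $G'$, and since $C$ avoids $u'$, the same vertex/edge set is an induced cycle of $G$ containing $R$, as desired. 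The remaining step is to handle the case where $x$ or $y$ is one of $u,v$, or is adjacent to one of them: here one replaces the offending endpoint by the contracted vertex $u'$ to obtain an induced path $R^{*}$ of $G'$ still extending $P$ and still avoiding $N[u']$ — one needs to check $R^{*}$ is genuinely induced in $G'$, using that $P$ is far from $\{u,v\}$ so the only adjacencies to worry about are at the modified endpoint — apply avoidability to get an induced cycle $C^{*}$ of $G'$ through $R^{*}$, and then lift $C^{*}$ back to an induced cycle of $G$ through $R$ by "un-contracting" $u'$ into the appropriate one of $u,v$ (or a short detour $u$–$v$), again using that no vertex of $P$ sees $u$ or $v$ so no new chords are created along $P$.

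The main obstacle I anticipate is the bookkeeping in this last lifting step: verifying that when $C^{*}$ passes through $u'$ and we expand $u'$ back into $u$ or $v$ (or into the edge $uv$), the resulting closed walk in $G$ is still a \emph{chordless} cycle and still contains all of $R$ as a contiguous subpath. The edges of $C^{*}$ incident to $u'$ came from edges of $G$ incident to $u$ or to $v$, so one must argue that we can consistently route through exactly one of them, or through both via the edge $uv$, without creating a chord; here the hypothesis that $P$ (and hence the interior of $R$) is disjoint from $N[u]\cup N[v]$ is exactly what rules out unwanted chords between the expanded portion and $P$. Everything else — the nonadjacency of $x$ and $y$, the transfer of induced cycles between $G$ and $G'$ away from the contracted vertex — is routine. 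This is precisely the content extracted from \cite[Lemma 15]{BDHT19}, so I expect the argument there to serve as a template.
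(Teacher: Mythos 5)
Your overall skeleton matches the paper's: transport the extension $xPy$ to $G'$, apply avoidability of $P$ in $G'$, and lift the resulting induced cycle back to $G$. But the middle of your argument goes astray. The case analysis on how $x,y$ interact with $\{u,v\}$ and $N[u]\cup N[v]$ rests on a false premise. Since $P$ lies in $G'-N[u']=G-N[\{u,v\}]$, no vertex of $P$ has a neighbour in $\{u,v\}$; as $x$ and $y$ are adjacent to endpoints of $P$, neither can equal $u$ or $v$ at all --- you only exclude the case that \emph{both} do, and relegate the single-occurrence case to a ``remaining step'' that is in fact vacuous. More importantly, $x$ or $y$ being merely \emph{adjacent} to $u$ or $v$ causes no problem: $xPy$ contains no vertex of $\{u,v\}$, and contraction does not alter adjacencies among vertices outside $\{u,v\}$, so $xPy$ is automatically an induced path, hence a legitimate extension of $P$, in $G'$. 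The definition of avoidability does not require the extension to avoid $N[u']$. Your proposed repair for this non-problem --- replacing the offending endpoint by $u'$ --- cannot work: $u'$ is not adjacent to any endpoint of $P$ in $G'$ (again because $P$ avoids $N[\{u,v\}]$), so $u'Py$ is not even a path; and even if it were, closing a modified extension $R^{*}$ would not certify that the \emph{original} extension $R$ is closable, which is what avoidability of $P$ in $G$ demands.

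There is a second gap in your ``easy case'': you assert that the closing cycle $C$ avoids $u'$ because $R$ does. This does not follow --- $C$ may pass through $u'$ on the arc outside $R$. So the un-contraction step you defer to the end is needed in every case, not only the ``hard'' ones. That lifting step itself is sound and is exactly what the paper does: if $u'\in C$, replace it by $u$, by $v$, or by the edge $uv$ as appropriate, chordlessness being preserved because any chord of $G$ at $u$ or $v$ to a non-consecutive cycle vertex would give a chord at $u'$ in $G'$. Once the spurious case analysis is deleted and the lifting is applied uniformly to the cycle $C$, your argument collapses to the paper's proof.
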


\noindent For the sake of completeness we include the proof.
\begin{proof}
Since $G'-N[u']=G-N[\{ u,v\}]$,
the path $P$ is an induced path in $G$. Suppose that $P$ is avoidable
in $G'$ and consider an extension $xPy$ of $P$ in $G$. Since $P$
is contained in $G'-N[u']$, vertices $x$ and $y$ are
distinct from $u$ and $v$. Therefore, $xPy$ is an induced path
in $G'-u'$. Since $P$ is avoidable in $G'$, there exists an induced
cycle $C$ in $G'$ containing $xPy$. If $C$ does not contain $u'$,
then $C$ is also induced in $G$. Otherwise, replacing $u'$ in $C$
with either $u$, $v$, $uv$, or $vu$ as appropriate, we obtain
an induced cycle in $G$ containing $xPy$. This shows that $P$ is
avoidable in $G$.
\end{proof}

%%%%%

\begin{lem}
\label{lem:Hr-forall}For any graph $G$ and positive integer $k,$
property $\Hr{G,k}$ holds.
\end{lem}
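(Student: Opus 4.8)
The plan is to prove $\Hr{G,k}$ by induction on the order of $G$. Fix $k$ and a vertex $v\in V(G)$; we must show that every induced path $P=p_1\ldots p_k$ in $H\coloneqq G-N[v]$ can be shifted within $H$ to a path that is avoidable in $G$. If $H$ is empty there is nothing to prove, and if $P$ is already avoidable in $G$ we are done, so assume $P$ has an extension $xPy$ in $H$ (note that any extension must stay inside $H$ because $P\subseteq H$, but a priori $x$ or $y$ could see $v$ — actually since we only need an extension \emph{in $G$}, and then want it closed \emph{in $G$}; one must be a little careful here). The key idea, following Bonamy et al., is to look at a shortest path $R$ from $v$ to the component of $H$ containing $P$ and to use the neighbor of $v$ on $R$, call it $u$, together with the contraction lemma (Lemma~\ref{lem:contraction}): contracting an edge incident to $u$ produces a strictly smaller graph $G'$ in which, by the induction hypothesis applied to $\Hr{G',k,u'}$ (or to $\Hb{G',k}$ via Lemma~\ref{lem:Hr-Hb}), $P$ can be shifted to a path avoidable in $G'$, and then Lemma~\ref{lem:contraction} lifts avoidability back to $G$.

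More concretely, the steps I would carry out are: (1) Set up the induction on $|V(G)|$, handling the base case (e.g. $|V(G)|$ small enough that $G-N[v]$ has no $P_k$) trivially. (2) Given $v$ and an induced $P_k$ named $P$ in $G-N[v]$, reduce to the case that $P$ is non-simplicial in $G-N[v]$, hence has an extension; otherwise $P$ is avoidable in $G-N[v]$ and one argues it is avoidable in $G$, or one shifts it first. (3) Choose a vertex $u\in N(v)$ that has a neighbor $w$ in $V(G)\setminus N[v]$ lying in the same connected component of $G-N[v]$ as $P$ — such $u$ exists by taking a shortest path in $G$ from $v$ to $P$. (4) Pass to $G'\coloneqq G/_{vu\to u'}$ (or contract $uw$; the right choice is the one making $G'-N[u']=G-N[\{u,v\}]$ contain $P$, which is exactly the hypothesis of Lemma~\ref{lem:contraction}), observe $|V(G')|<|V(G)|$, and invoke the induction hypothesis in the form $\Hr{G',k,u'}$ to shift $P$ inside $G'-N[u']=G-N[\{u,v\}]\subseteq G-N[v]$ to a path $P^\ast$ avoidable in $G'$. (5) Apply Lemma~\ref{lem:contraction} to conclude $P^\ast$ is avoidable in $G$, and note that a shift sequence inside $G-N[\{u,v\}]$ is in particular a shift sequence inside $G-N[v]$, as required by the definition of $\Hr{G,k,v}$.

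The main obstacle I expect is getting the reduction in step (3)–(4) to go through cleanly: one needs the contracted vertex $u'$ to satisfy $N[u']\supseteq$ exactly the vertices we want deleted, so that $G'-N[u']$ still contains $P$ and so that the shifts produced in $G'$ are legal shifts in $G-N[v]$. This is delicate when $P$ lies in a component of $G-N[v]$ that is ``deep'', i.e. far from $v$: the single contraction only moves us one step closer, so one must argue that after contraction the new graph $G'$ still has a vertex (namely $u'$, the image of the $vu$ edge) whose closed neighborhood, when removed, leaves $P$ — and that the induction hypothesis for $G'$ at the vertex $u'$ does the rest. A secondary subtlety is the treatment of the case where $P$ is simplicial in $G-N[v]$ but not in $G$: here one may need to first perform a shift in $G$ (not in $G-N[v]$) to move to a path that is genuinely inside a smaller deleted neighborhood, mirroring the proof of Lemma~\ref{lem:Hr-Hb}, and then ensure the resulting path still lies in $G-N[v]$ so that the overall sequence is a valid $(G-N[v])$-shift sequence. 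Once these bookkeeping points about which neighborhood is being deleted are pinned down, the argument is a routine induction powered by Lemmas~\ref{lem:contraction} and~\ref{lem:Hr-Hb}.
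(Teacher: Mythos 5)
There is a genuine gap: your induction contracts the wrong edge, and the step where you apply the induction hypothesis to $P$ itself does not go through. You propose to pick $u\in N(v)$ on a shortest path from $v$ to the component of $G-N[v]$ containing $P$, contract $vu$, and invoke $\Hr{G',k,u'}$ for $G'=G/_{vu\to u'}$. But $\Hr{G',k,u'}$ only speaks about induced paths lying in $G'-N[u']=G-N[\{u,v\}]$, and $P$ need not lie there: $u$ may well have neighbors on $P$ (e.g.\ when the component containing $P$ is at distance $2$ from $v$ and $u$'s neighbor in that component is a vertex of $P$). You flag this as ``delicate'' but the delicacy is not a bookkeeping issue --- with your choice of $u$ there is in general no way to place $P$ inside the deleted neighborhood, so the recursion cannot start. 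Relatedly, your step (2) is not sound as stated: a path that is simplicial (hence avoidable) in $G-N[v]$ need not be avoidable in $G$, since it can have extensions through $N(v)$ that fail to close; handling exactly those extensions is the whole content of the lemma, not a case one can dispose of at the outset.

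The missing idea, which is the heart of the paper's argument, is to let the \emph{non-closable extension} choose the edge to contract, after a preliminary round of shifting. Concretely: by minimality, $\Hr{G-N[u],k}$ and hence (via Lemma~\ref{lem:Hr-Hb}) $\Hb{G-N[u],k}$ hold, so one first shifts $Q$ inside $G-N[u]$ to a path $Q'=q_1\dots q_k$ that is \emph{avoidable in $G-N[u]$}. If $Q'$ is not avoidable in $G$, it has a non-closable extension $xQ'y$, and the avoidability of $Q'$ in $G-N[u]$ forces \emph{exactly one} of $x,y$ (say $x$, adjacent to $q_1$) to lie in $N(u)$. One then performs one more shift to $Q''=q_2\dots q_k y$, which by construction avoids $N[\{u,x\}]$, contracts the edge $ux$ (not $uv$ for an arbitrary neighbor), and applies $\Hr{G',k,u'}$ to $Q''$; Lemma~\ref{lem:contraction} then yields the contradiction. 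Your proposal has the right outer shell (induction on order, Lemma~\ref{lem:Hr-Hb}, Lemma~\ref{lem:contraction}) but omits both the preliminary shift to a path avoidable in $G-N[u]$ and the extension-driven choice of the contracted edge together with the extra shift $Q'\to Q''$; without these, the inductive step fails.
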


\begin{proof}
Fix $k$ and let $G$ be a graph of minimal order for which $\Hr{G,k}$
does not hold. In particular, let $u$ be a vertex in $G$ such that
$\Hr{G,k,u}$ does not hold. Then, there exists an induced path $Q$
in $G-N[u]$ that cannot be shifted in $G-N[u]$
to any avoidable path in $G$.

Since $G-N{[u]}$ is of smaller order than $G$, property
$\Hr{G-N{[u]},k}$ holds. By Lemma~\ref{lem:Hr-Hb}, property
$\Hb{G-N{[u]},k}$ holds as well. Therefore, there exists
a path $Q'=q_{1}\dots q_{k}$ such that $Q'$ is avoidable in $G-N[u]$
and $Q\S{G-N[u]}{~}Q'$. The choice of $Q$ implies that $Q'$
is not avoidable in $G$, thus $Q'$ has an extension $xQ'y$ that
is not closable in $G$. Note that precisely one of $x,y$ is a member
of $N(u)$, as otherwise the extension $xQ'y$ would be
closable in $G$. We may assume w.l.o.g. that $x$ is a common neighbor
of $u$ and $q_{1}$.

Set $G'\coloneqq G/_{ux\to u'}$.
Observe that $Q''\coloneqq q_{2}\dots q_{k}y$ does not contain $u$, $x$, or any neighbor in $G$ of $u$ or $x$. Therefore, $Q''$ is a path in $G'-N[u']$.
Again, the minimality of $G$ implies
property $\Hr{G',k}$, in particular, also $\Hr{G',k,u'}$ holds.
Hence, $Q''$ can be shifted in $G'-N[u']$ to an induced
path $Q^{*}$ that is avoidable in $G'$. So we have $Q\S{G-N[u]}{~}Q'\S{G-N[u]}{~}Q''\S{G'-N[u']}{~}Q^{*}$,
where the relations follow from the definitions of $Q',Q''$, and
$Q^{*}$, respectively. Since $G'-N[u']$ is an induced
subgraph of $G-N[u]$, we have $Q\S{G-N[u]}{~}Q^{*}$.
The choice of $Q$ implies that $Q^{*}$ is not avoidable in $G$,
which contradicts Lemma~\ref{lem:contraction}.
\end{proof}

\begin{proof}[Proof of Theorem~\ref{thm:avoidable-induced}]
 Immediate from Lemmas~\ref{lem:Hr-Hb} and \ref{lem:Hr-forall}.
\end{proof}

The proof of Theorem~\ref{thm:avoidable-induced} is constructive.
It gives an algorithm for computing a sequence of shifts transforming a given induced path in a graph $G$ to an avoidable one, see {Procedures~\ref{alg:shifting} and~\ref{alg:refined}}.
We do not know if the algorithm runs in polynomial time.

\begin{algorithm}[ht!]
\begin{algorithmic}[1]

\Require a graph $G$ and an induced path $P=p_{1}p_{2}\dots p_{k}$ in $G$

\Ensure a sequence $S$ of paths shifting $P$ to an avoidable induced path in $G$

\If{there exists an extension $xPy$ of $P$}

\State $Q\gets yp_k\ldots p_1x$

\State \Return $P,\textsc{RefinedShifting}(G,Q)$

\Else

\State \Return $P$

\EndIf

\end{algorithmic}

\caption{\label{alg:shifting}$\textsc{Shifting}(G,P)$}
\end{algorithm}

\begin{algorithm}[ht!]
\caption{\label{alg:refined}$\textsc{RefinedShifting}(G,P)$}
\begin{algorithmic}[1]
\Require a graph $G$ and an induced path $P=p_{1}\dots p_{k+2}$ in $G$
\Ensure a sequence $S$ of paths in $G-N[p_{k+2}]$ shifting $p_1\ldots p_k$ to an avoidable induced path in $G$

\State $P' \gets p_{1}\dots p_{k}$
\State $S\gets$ the one-element sequence containing path $P'$
\If{there exists an extension $xP'y$
in $G-N[p_{k+2}]$}
\State $S\gets S,\textsc{RefinedShifting}(G-N[p_{k+2}],xP'y)$
\EndIf
\State $Q\gets$ the end path of $S$

\If{$Q$ has an extension $xQy$ in $G$ such that
$y$ is the unique neighbor of $p_{k+2}$ in $\{ x,y\}$}
\State let $Q=q_{1}\dots q_{k}$ such that $y$ is adjacent to $q_{k}$
\State $Q'\gets xq_{1}\dots q_{k}$
\State  $G'\gets G/_{p_{k+2}y\rightarrow y'}$
\State  $S'\gets\textsc{RefinedShifting}(G',Q'y')$
\State \Return $S,S'$
\Else
\State \Return $S$
\EndIf
\end{algorithmic}
\end{algorithm}

\section{Paths}\label{sec:paths}

The main result of this section is the following theorem, which settles the case of paths from Theorem~\ref{thm:main}.

\begin{thm}\label{thm:paths}
Every path in a graph $G$ can be shifted to an avoidable one.
\end{thm}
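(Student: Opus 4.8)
The plan is to mirror the three-lemma structure used for induced paths in Section~\ref{sec:induced-paths}, replacing ``induced path'' by ``path'' throughout and adjusting the local modifications of the host graph accordingly. Concretely, I would define, for a graph $G$ and a positive integer $k$, properties analogous to $\Hb{G,k}$ and $\Hr{G,k,v}$: say $\Hb{G,k}$ holds if every $\2$-walk on $k$ vertices in $G$ can be $\2$-shifted to an avoidable one, and $\Hr{G,k,v}$ holds if every such path in $G-N[v]$ can be shifted \emph{within} $G-N[v]$ to a path that is avoidable in $G$, with $\Hr{G,k}$ the conjunction over all $v$. The implication $\Hr{G,k}\Rightarrow\Hb{G,k}$ goes through verbatim as in Lemma~\ref{lem:Hr-Hb}: given a path $Q=q_1\ldots q_k$, either it is $\2$-simplicial (done), or it has an extension $xQy$, we shift to $Q'=q_2\ldots q_k y$, and apply $\Hr{G,k,x}$ to finish; the only thing to check is that the shift $Q\to Q'$ is a legitimate $\2$-shift, which it is by the definition of shift applied to the path $x q_1\ldots q_k$ (equivalently, to $q_1\ldots q_k y$).

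The heart of the matter is the analogue of Lemma~\ref{lem:Hr-forall}, proved by taking a counterexample $G$ of minimal order together with a bad vertex $u$. As before, $G-N[u]$ has smaller order, so $\Hr{G-N[u],k}$ and hence $\Hb{G-N[u],k}$ hold, giving a path $Q'=q_1\ldots q_k$ avoidable in $G-N[u]$ with $Q\,\S{G-N[u]}{}\,Q'$; the choice of $Q$ forces $Q'$ to have an extension $xQ'y$ that is not closable in $G$, and exactly one of $x,y$ — say $x$ — lies in $N(u)$, adjacent to $q_1$ (if neither did, the extension would live in $G-N[u]$ and be closable; if both did, $u$ would close it via a cycle through $x,u,y$, using that $x\ne y$ since $x\in N(u)$ while $y$ is not). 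The key difference from the induced case is what replacement operation on $G$ to use. For induced paths one contracts the edge $ux$; the analogue of Lemma~\ref{lem:contraction} for paths would say that contracting $ux$ to $u'$ preserves non-closability into a cycle for paths avoiding $N[u']$ — and since any cycle through the contracted vertex lifts to a closed \emph{walk} but not necessarily to a \emph{cycle} in $G$, one must instead argue that it lifts to a closed path (a cycle), which works because $u'$ has degree two on the cycle and its two cycle-neighbors are non-adjacent to each other only in the worst case; more carefully, a cycle through $u'$ lifts to a cycle or a closed walk of length one more in $G$, and we need the former. I expect this is the main obstacle, and the cleanest fix is probably to \emph{not} contract but rather to pass to $G-u$ or to delete an appropriate vertex, or to observe that for paths ``closable'' only requires a closed $\2$-walk, i.e.\ a cycle, so a cycle through $u'$ in $G/_{ux}$ of length $\ge 3$ always lifts: replace $u'$ by the path $u,x$ or $x,u$, obtaining a cycle in $G$ (of length one greater), which is still a closed path. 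Granting the contraction lemma, the minimality of $G$ gives $\Hr{G',k}$ for $G'=G/_{ux\to u'}$, we set $Q''=q_2\ldots q_k y$, note $Q''$ lives in $G'-N[u']=G-N[\{u,x\}]$, shift it within $G'-N[u']$ to a path $Q^*$ avoidable in $G'$, assemble the chain $Q\,\S{G-N[u]}{}\,Q'\,\S{G-N[u]}{}\,Q''\,\S{G'-N[u']}{}\,Q^*$, use that $G'-N[u']$ is an induced subgraph of $G-N[u]$ to collapse it to $Q\,\S{G-N[u]}{}\,Q^*$, and contradict the choice of $Q$ via the contraction lemma.

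An alternative, and perhaps cleaner, route — the excerpt advertises that the theorem is ``proved in two different ways'' — is to \emph{derive} the path case from the induced-path case (Theorem~\ref{thm:avoidable-induced}) by a gadget construction: given a graph $G$ with a path $P$, build an auxiliary graph $\widehat G$, e.g.\ the incidence subdivision or a suitable blow-up, in which non-induced paths of $G$ correspond bijectively to induced paths of $\widehat G$ in a way that commutes with shifts and with the cycle structure governing avoidability, then push $P$'s image to an avoidable induced path in $\widehat G$ and pull back. The obstruction here is purely bookkeeping: one must verify that the correspondence sends extensions to extensions, shifts to shifts, and induced cycles through the relevant subpath to closed walks through the corresponding subpath, and conversely — subdividing each edge once is the natural candidate since it kills all multi-edges and loops and turns ``path'' into ``induced path'' on the odd-indexed vertices, though one has to handle the parity of lengths and the fact that a cycle in $G$ of length $\ell$ becomes one of length $2\ell$ in the subdivision. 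I would present the direct minimal-counterexample argument as the main proof, since it reuses the machinery already set up, and mention the reduction-via-subdivision as the second proof; the principal technical point either way is the correct replacement/lifting step ensuring avoidability is inherited downward.
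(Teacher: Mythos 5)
Your primary route---transplanting the $\Hr{\cdot}$/$\Hb{\cdot}$ machinery of Section~\ref{sec:induced-paths} with ``induced path'' replaced by ``path''---has a gap that is more fundamental than the contraction lemma you single out as the main obstacle. The contraction step is in fact unproblematic (a cycle through $u'$ in $G/_{ux\to u'}$ always lifts to a cycle in $G$ by substituting $u$, $x$, $ux$, or $xu$, exactly as you observe). What breaks is the use of $G-N[v]$. In the analogue of Lemma~\ref{lem:Hr-Hb}, after extending $Q=q_1\ldots q_k$ to $xQy$ and shifting to $Q'=q_2\ldots q_k y$, one must know that $Q'$ is a path in $G-N[x]$ in order to invoke $\Hr{G,k,x}$. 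For induced paths this is automatic, since $xQy$ being induced forces $x$ to have no neighbour among $q_2,\ldots,q_k,y$. For a mere path extension, $x$ may be adjacent to interior vertices of $Q$ or to $y$, so $Q'$ need not lie in $G-N[x]$ and $\Hr{G,k,x}$ cannot be applied; this is not a matter of checking that the shift is legitimate, as you suggest. The same failure recurs at the heart of your induction, where you assert that $Q''=q_2\ldots q_k y$ ``lives in $G'-N[u']=G-N[\{u,x\}]$'': without inducedness of $xQ'y$ there is no reason for $Q''$ to avoid $N[x]$. Your proposed fix (``pass to $G-u$ instead'') is not carried out and changes the argument materially: the dichotomy ``exactly one of $x,y$ lies in $N(u)$'' becomes ``$u\in\{x,y\}$'', and the lifting lemma must then handle extensions whose new endpoints are $u$ or $q_1$ themselves. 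Your alternative route via edge subdivision also does not work as described: a single shift in $G$ corresponds to two shifts in the subdivision whose intermediate path ends at a subdivision vertex, the avoidable induced path produced by Theorem~\ref{thm:avoidable-induced} in the subdivision need not have both endpoints among original vertices, and an extension in the subdivision (by half an edge) does not correspond to an extension in $G$.

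For comparison, the paper's two proofs sidestep these issues entirely. The first reduces to Theorem~\ref{thm:avoidable-induced} via the \emph{line graph}: paths of length $\ell\ge 1$ in $G$ correspond to induced paths of length $\ell-1$ in the line graph, shifts correspond to shifts, and induced cycles of length at least $4$ in the line graph project to cycles in $G$, so avoidability pulls back (Lemma~\ref{lem:line-graphs}); this is the gadget your subdivision idea is reaching for. The second proof is a direct, self-contained argument on a DFS tree rooted at $P$, shifting $P$ along suitable root-to-leaf paths until it is either avoidable or simplicial, and yields a polynomial-time algorithm. As written, your proposal does not establish the theorem; to salvage the minimal-counterexample approach you would need to redesign $\Hr{\cdot}$ around single-vertex deletion and re-prove the lifting lemma in that setting.
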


We offer two proofs. The first proof will rely on several observations about line graphs.
Recall that the \emph{line graph} of a graph $G$ is the graph $G'$ with $V(G') = E(G)$ such that two distinct edges $e$ and $f$ of $G$ form a pair of adjacent vertices in $G'$ if and only if $e$ and $f$ share an endpoint in $G$.

\begin{lem}\label{lem:line-graphs}
Let $G$ be a graph and let $G'$ be its line graph. Then the following statements hold.
\begin{enumerate}[(a)]
\item\label{item-1} Let $P$ be a path  of length $\ell\ge 1$ in $G$ and let $P'$ be the sequence of edges of $P$ along the path. Then $P'$ is an induced path of length $\ell-1$ in $G'$.
\item\label{item-2} Let $C'$ be an induced cycle of length at least four in $G'$. Then, the sequence of vertices of $C'$ along the cycle yields a sequence of edges of $G$ that forms a cycle $C$ in $G$.
\item\label{item-2.5} Let $P'$ be an induced path in $G'$ and let $\ell$ be the length of $P'$.
Then, the sequence of vertices of $P'$ along the path yields a sequence of edges of $G$ that forms a path $P$ of length $\ell+1$ in $G$.
\item\label{item-3} For every $\3$-avoidable induced path $P'$ in $G'$, the corresponding path $P$ in $G$ (as in~\ref{item-2.5}) is a $\2$-avoidable path in $G$.
\item\label{item-4}
For every two induced paths $P'$ and $Q'$ in $G'$  that are $\3$-shifts of each other in $G'$, the corresponding paths $P$ and $Q$ in $G$ (as in~\ref{item-2.5}) are $\2$-shifts of each other in~$G$.
\end{enumerate}
\end{lem}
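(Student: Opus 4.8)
The plan is to establish the five parts of Lemma~\ref{lem:line-graphs} more or less in the order listed, since each later part leans on the translation dictionary set up by the earlier ones. Parts \ref{item-1}, \ref{item-2}, and \ref{item-2.5} are purely structural and should be dispatched first, as they fix the correspondence between walks in $G$ and walks in $G'$ that the rest of the lemma exploits.

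\medskip

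\noindent\emph{Parts \ref{item-1}, \ref{item-2}, \ref{item-2.5}.}
For \ref{item-1}, I would write $P = p_0 p_1 \dots p_\ell$ and let $e_i = \{p_{i-1},p_i\}$ be its consecutive edges; consecutive edges $e_i,e_{i+1}$ share the vertex $p_i$, so they are adjacent in $G'$, while non-consecutive $e_i,e_j$ are disjoint as sets of vertices of $G$ (here one uses that $P$ is a \emph{path}, so all $p_i$ are distinct), hence non-adjacent in $G'$. This makes $P'=e_1\dots e_\ell$ an induced path with $\ell$ vertices, i.e.\ of length $\ell-1$. For \ref{item-2.5}, run this in reverse: an induced path $P' = f_1 \dots f_{\ell+1}$ in $G'$ has each consecutive pair $f_i,f_{i+1}$ sharing a vertex of $G$, and each non-consecutive pair disjoint; I would argue that the shared vertex of $f_i,f_{i+1}$ is distinct from the shared vertex of $f_{i+1},f_{i+2}$ (otherwise $f_i$ and $f_{i+2}$ would both contain it, contradicting non-adjacency), so writing $f_i = \{q_{i-1},q_i\}$ consistently yields a walk $q_0 q_1 \dots q_{\ell+1}$ in $G$, and the $q_j$ are pairwise distinct because a repeat $q_a = q_b$ with $b>a+1$ would force $f_{a}$ or $f_{a+1}$ to meet $f_{b}$ or $f_{b+1}$ non-trivially. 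Hence $P$ is a path of length $\ell+1$. Part \ref{item-2} is the cyclic version of the same bookkeeping and I would simply note that the argument for \ref{item-2.5} closes up around the cycle, using length $\ge 4$ exactly to guarantee that the ``shared vertex'' assignment is consistent all the way around (for triangles in $G'$ the three edges may form a star in $G$, which is why the hypothesis is needed).

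\medskip

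\noindent\emph{Parts \ref{item-3} and \ref{item-4}.}
These are where the work is. For \ref{item-4}: if $P'$ and $Q'$ are $\3$-shifts in $G'$, their union is an induced path on one more vertex, say $f_0 f_1 \dots f_{\ell+1}$ with $P' = f_0\dots f_\ell$ and $Q' = f_1\dots f_{\ell+1}$ (or the reverse). By \ref{item-2.5} this translates to a path $R = q_0 \dots q_{\ell+2}$ in $G$ whose sub-path on the first $\ell+2$ vertices is $P$ and on the last $\ell+2$ vertices is $Q$; that is exactly the statement that $P$ and $Q$ are $\2$-shifts (their union $R$ is a path with one more vertex). The only subtlety is matching up orientations and endpoints, which is routine. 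For \ref{item-3}: suppose $P'$ is $\3$-avoidable in $G'$ and let $R = xPy$ be a $\2$-extension of $P$ in $G$, i.e.\ $R = r_0 r_1 \dots r_{\ell+3}$ is a path with $P = r_1 \dots r_{\ell+2}$ as an infix. Translating $R$ via \ref{item-1} gives an induced path $R' = g_0 g_1 \dots g_{\ell+2}$ in $G'$ (the edges of $R$), and the middle portion $g_1 \dots g_{\ell+1}$ is exactly $P'$; so $R'$ is a $\3$-extension of $P'$. Since $P'$ is $\3$-avoidable, $R'$ lies on an induced cycle $C'$ in $G'$; because $R'$ has length $\ge 1$, $C'$ has length $\ge 4$, so part \ref{item-2} applies and $C'$ corresponds to a cycle $C$ in $G$ containing (the edge sequence of) $R$, hence containing $R$ as a subwalk. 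That shows every $\2$-extension of $P$ is $\2$-closable, i.e.\ $P$ is $\2$-avoidable.

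\medskip

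\noindent\textbf{The main obstacle} I anticipate is purely notational rather than conceptual: keeping the vertex-versus-edge dictionary straight across $G$ and $G'$, and in particular handling the degenerate cases where the combinatorial object in $G$ is ``shorter'' than one expects (loops, the triangle-in-$G'$ issue in \ref{item-2}, and the length-$1$ edge cases where induced paths in $G'$ have length $0$). I would state the correspondence of \ref{item-2.5} once, carefully, as a named bijection-up-to-reversal between the relevant paths, and then invoke it uniformly in \ref{item-3} and \ref{item-4} so that those proofs become one-line translations. The reader should be warned that \ref{item-3} genuinely needs the ``length at least four'' hypothesis of \ref{item-2}: this is why we translate the \emph{extension} $xPy$ (which has length $\ge 1$ in $G$, hence $\ge 1$ after passing to $G'$ as well, forcing the enclosing induced cycle to have length $\ge 4$) rather than $P$ itself.
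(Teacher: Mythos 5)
Your proposal is correct and follows essentially the same route as the paper's proof: the identical edge--vertex translation dictionary for parts \ref{item-1}, \ref{item-2}, and \ref{item-2.5}, and the same reduction of parts \ref{item-3} and \ref{item-4} to that dictionary (translate the $\2$-extension into a $\3$-extension, invoke $\3$-avoidability, and pull the induced cycle back through part \ref{item-2}; respectively, translate the union path witnessing the shift). One small correction in part \ref{item-3}: an induced path of length $1$ in $G'$ can lie in a triangle, so ``$R'$ has length $\ge 1$'' does not by itself force the enclosing induced cycle to have length $\ge 4$ --- what you should say (and what holds here) is that $R'$ has $\ell+3\ge 3$ vertices and is induced, so its endpoints are non-adjacent and the induced cycle containing it has length at least $\ell+4\ge 4$.
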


For the sake of completeness we include a proof, which is lengthy but straightforward.

\begin{proof}
\ref{item-1}.
Let $e_1,\ldots, e_{\ell}$ be the edges of $P$ in order.
Since $P$ is a path in $G$, these edges are pairwise distinct.
Furthermore, for all $i,j\in \{1,\ldots, \ell\}$ with $i<j$, edges $e_i$ and $e_j$ share an endpoint in $G$ if and only if $j = i+1$; thus, $e_i$ and $e_j$ are adjacent as vertices of $G'$ if and only if $j = i+1$.
We conclude that $P'$ is an induced path of length $\ell-1$ in $G'$.

\ref{item-2}. Let $\ell\ge 4$ be the length of $C'$ and let $e_1,\ldots, e_\ell$ be a cyclic order of vertices of $C'$.
Then $e_1,\ldots, e_\ell$ are pairwise distinct edges of $G$, with two sharing an endpoint in $G$ if and only if they appear consecutively in the cyclic order.
In particular, since $\ell\ge 4$, no three of these edges share a common endpoint.
Thus, if for all $i\in \{1,\ldots, \ell\}$ we denote by $v_{i}$ the common endpoint in $G$ of $e_i$ and $e_{i+1}$ (indices modulo $\ell$), then vertices $v_1,\ldots, v_\ell$ are pairwise distinct, and  $e_i = \{v_{i-1},v_i\}$ for all $i\in \{1,\ldots, \ell\}$ (with $v_0 = v_\ell$).
In particular, $C = (v_1,e_1,v_2,\ldots, v_{\ell},e_{\ell},v_1)$ is a cycle in $G$ formed by the edges of $C'$.

\ref{item-2.5}. The proof is very similar to (but simpler than) that of item~\ref{item-2}.

\ref{item-3}. Let $e_1,\ldots, e_{\ell+1}$ be the vertices of $P'$ in order.
By~\ref{item-2.5}, the sequence of edges $e_1,\ldots, e_{\ell+1}$ forms a path $P$ of length ${\ell+1}$ in $G$.
Suppose that $P'$ is an $\3$-avoidable induced path in $G'$.
To show that $P$ is a $\2$-avoidable path in $G$, we verify that every $\2$-extension of $P$ is $\2$-closable.
Let $Q$ be an arbitrary $\2$-extension of $P$ in $G$.
Then there exist two edges $e_0$ and $e_{\ell+2}$ in $G$ such that $Q$ is a path of length $\ell+3$, with edges $e_0,e_1,\ldots, e_{\ell+1},e_{\ell+2}$ in order.
By part~\ref{item-1} of the lemma, this sequence of edges is a sequence of vertices in $G'$ forming an induced path $Q'$ of length $\ell+2$ in $G'$.
Note that $Q'$ is an $\3$-extension of the induced path $P'$ in $G'$.
Since $P'$ is an $\3$-avoidable induced path in $G$, every $\3$-extension of $P'$ is $\3$-closable.
In particular, $Q'$ is contained in an induced cycle $C'$ in $G'$.
Since $Q'$ is an induced path contained in $C'$, the length of $C'$ is at least $(\ell+2)+2\ge 4$.
Thus, by part~\ref{item-2} of the lemma, the sequence of vertices of $C'$ along the cycle yields a sequence of edges of $G$ that forms a cycle $C$ in $G$.
Furthermore, $Q$ is contained in $C$ and hence $\2$-closable.
Thus, every $\2$-extension of $P$ is closable and $P$ is indeed a $\2$-avoidable path in~$G$.

\ref{item-4}. Let $\ell$ be the common length of the paths $P'$ and $Q'$.
Then $P$ and $Q$ are both of length $\ell+1$.
The paths $P'$ and $Q'$ are $\3$-shifts of each other in $G'$, and hence, considering paths as subgraphs, the union of $P'$ and $Q'$ is an induced path $R'$ of length $\ell+1$ in $G'$.
Let $R$ be the path in $G$ corresponding to $R'$ (as in item~\ref{item-2.5} of the lemma).
Then $R$ is a path of length $\ell+2$ in $G$ that is the union of paths $P$ and $Q$.
This shows that $P$ and $Q$ are $\2$-shifts of each other in~$G$.
\end{proof}

\begin{proof}[First proof of Theorem~\ref{thm:paths}.]
The first proof is based on a reduction to Theorem~\ref{thm:avoidable-induced}.
Let $P$ be a path in $G$ and let $\ell$ be the length
of $P$. Suppose that $\ell=0$. Then $P$ corresponds to a vertex $v\in V(G)$.
Let $U$ be the connected component of $G$ containing $v$. If $U$
contains only $v$ then clearly $P$ is avoidable in $G$. Otherwise,
let $u$ be a vertex in $U$ such that $U-u$ is connected. (Such
a vertex exists, for example, take a leaf of a spanning tree in $U$.)
Then $u$ is an avoidable path in $G$ such that $P\S{G}{\2}u$ .

Suppose now that $\ell\ge1$ and let $G'$ be the line graph of $G$.
Let $P'$ be the sequence of edges of $P$.
By item~\ref{item-1} of Lemma~\ref{lem:line-graphs}, $P'$ is an induced path of length $\ell-1$ in $G'$.
By Theorem~\ref{thm:avoidable-induced} there
exists an induced path $Q'$ that is avoidable in $G'$ and such that
$P'\S{G'}{\3}Q'$.
By items~\ref{item-2.5} and~\ref{item-3} of Lemma~\ref{lem:line-graphs}, the sequence of vertices of $Q'$ in $G'$ corresponds to a sequence of edges in $G$ forming a path $Q$ that is avoidable in $G$.
Furthermore, since $P'\S{G'}{\3}Q'$, we conclude using item~\ref{item-4} of Lemma~\ref{lem:line-graphs} that $P\S{G}{\2}Q$.
\end{proof}

In the second proof, all our arguments on paths will only depend on the corresponding sequences of vertices, even in the case of graphs with blue edges. Thus, we use notation introduced in Section~\ref{sec:induced-paths} and represent each path simply as a sequence of vertices.

\begin{proof}[Second proof of Theorem~\ref{thm:paths}.]
The second proof works directly on $G$ and is based on properties of depth-first search (DFS) trees. Let $P$ be a path in $G$ and let $\ell$ be the length of $P$. Consider a DFS traversal of $G$ starting in $P$ and let $T$ be the corresponding DFS tree.
Let $Q$ be a longest root-to-leaf path in $T$ such that $P$ is a subpath of $Q$. We shift $P$ along $Q$ all the way to the last vertex of $Q$, obtaining this way a path $P' = v'_0v'_1\ldots v'_\ell$, where $v'_\ell$ is a leaf in $T$. Let $Q'$ be a longest root-to-leaf path in the subtree of $T$ rooted at $v_0'$.
We now define a path $P''=v''_0v''_1\ldots v''_\ell$ depending on the length of $Q$.
If the length of $Q$ is at least~$2\ell$ we set $P''=P'$ (see Fig.~\ref{pic:path-tree}a).
Otherwise we shift $P'$ to the subpath $P''$ of $Q'$ such that $v''_\ell$
is a leaf in $T$ (see Fig.~\ref{pic:path-tree}b).

\begin{figure}[h]
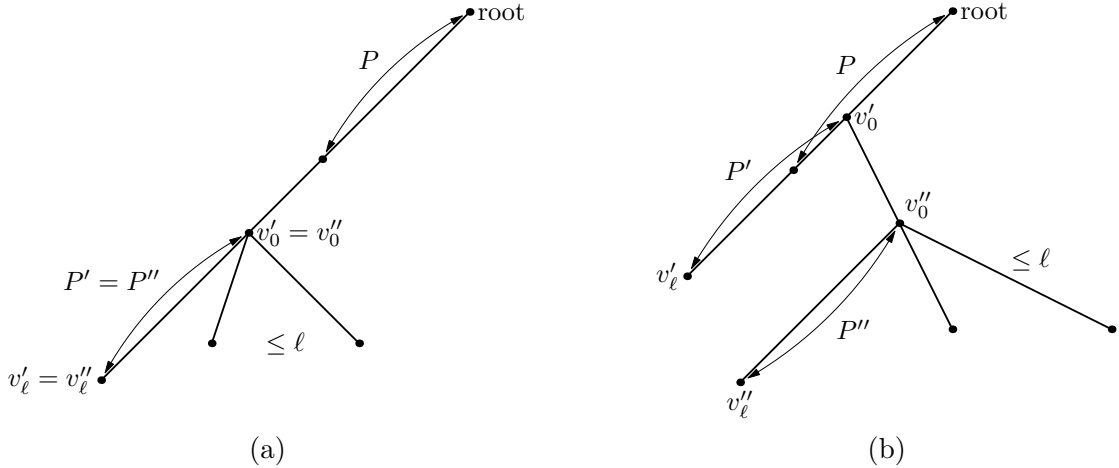

\centering
\begin{tabular}{c@{\hskip1.62cm}c}
  \raisebox{8.73pt}{\mpfile{trees}{11}} &\mpfile{trees}{1}\\
  (a) & (b)
\end{tabular}
  \caption{
  The two cases from the second proof, depending on the length of $Q$
  }\label{pic:path-tree}
\end{figure}
Note that the length of each path from $v''_0$ to a~leaf of the subtree of $T$ rooted at $v_0'$ is at most $\ell$, since  $Q'$ is a longest root-to-leaf path in this subtree.

If $P''$ is avoidable in $G$, we are done. Otherwise, $P''$ has an extension $$xP''y = xv''_0v''_1\ldots v''_\ell y$$ that is not closable. Since $T$ is a~DFS tree in $G$, all neighbors of $v''_\ell$ in $G$ are ancestors of $v''_\ell$ in $T$. In particular, this implies that $y$ is an ancestor of $v''_\ell$ and hence also an ancestor of $v''_0$.
Note that $y$ is a vertex of the path  $Q''=r\dots v'_0\dots v''_0$, where $r$ is the root of $T$ (see Fig.~\ref{pic:bad-ext} for the case when the length of $Q$ is less than $2\ell$).

\begin{figure}[ht]
\centering
  \mpfile{trees}{2}
  \caption{A hypothetical non-closable extension of $P''$}\label{pic:bad-ext}
\end{figure}

Since $xP''y$ is not closable, we infer that $x$ is not an ancestor of $v_0''$ in $T$.
Thus, $x$ is a child of $v''_0$ in $T$. Let $Q'''= v''_0x\dots w$ be a path in $T$ such that $w$ is a leaf in $T$.
We now shift $P''$ following  $Q'''$ from $v''_0$ to the last vertex of $Q'''$, obtaining this way a path $P'''$, the last vertex of which is $w$.  Note that, by choice  of $P''$, vertex $v''_0$ belongs to the path $P'''$ (see Fig.~\ref{pic:bad-ext}). Thus, if $w$ has a neighbor in $G$ that is a proper ancestor of $v''_0$ in $T$, then $xP'y$ would be a closable extension of $P''$, which is not possible. We conclude that all neighbors of $w$ in $G$ are also vertices of $P'''$. Hence, $P'''$ is a simplicial path in $G$.
Since $P\S{G}{\2}P'$, $P'\S{G}{\2}P''$, and $P''\S{G}{\2}P'''$, we have
$P\S{G}{\2}P'''$. Thus, $P$ can always be shifted to an avoidable path in $G$.
\end{proof}

The second proof of Theorem~\ref{thm:paths} gives a polynomial-time algorithm for computing a sequence of shifts transforming a given path in a graph $G$ to an avoidable one, see Procedure~\ref{alg:shifting-simplepath}.

\begin{algorithm}[ht!]
\begin{algorithmic}[1]

\Require a graph $G$, a path $P$ in $G$

\Ensure a sequence $S$ of paths shifting $P$ to an avoidable path
in $G$

\State $\ell\gets\textsc{Length}(P)$

\State $T\gets \textsc{DFS}(G,P)$ \Comment{DFS tree w.r.t.~an ordering
  starting from $P$}

\State $Q\gets \textsc{Longest}(T,P)$ \Comment{A longest root-to-leaf path in $T$ starting with $P$.}

\State $S \gets \textsc{ShiftAlong}(Q,P)$ \Comment{The sequence of shifts
  along the path $Q$}

\State $P'\gets S[-1]$ \Comment{The last path in the sequence}

\State $v'_0 \gets P'[1]$ \Comment{The first vertex in the path}

\State $Q'\gets \textsc{Longest}(T, v'_0)$ \Comment{A longest
 root-to-leaf path in the subtree of $T$ rooted at $v'_0$}

\If{$\textsc{Length}(Q')\leq \ell$}

\State $P''\gets P'$
\Else

\State $R'\gets \textsc{Reverse}(P'), Q'$

\State $S \gets S, \textsc{ShiftAlong}(R',\textsc{Reverse}(P'))$

\State $P''\gets S[-1]$ \Comment{The last path in the sequence}

\EndIf

\If{there exists an extension $xP''y$ of $P''$ which is not closable}

\State $v''_0\gets P''[1]$

\State $Q'''\gets \textsc{Longest}(T, v''_0x)$ \Comment{A longest
path to the leaf starting with the edge $v''_0x$}

\State $R''\gets \textsc{Reverse}(P''), Q'''$

\State \Return $S, \textsc{ShiftAlong}(R'',\textsc{Reverse}(P''))$

\Else
\State \Return $S$

\EndIf

\end{algorithmic}

\caption{\label{alg:shifting-simplepath}$\textsc{PathShifting}(G,P)$}
\end{algorithm}

\section{Walks}\label{sec:walks}

For this case we provide two simple observations.
The first one already suffices to prove the first claim of Theorem \ref{thm:main} and the case $\0$ of Corollary \ref{cor:main}.

\begin{observation}\label{obs:walks-1}
Every walk in a graph is avoidable.
\end{observation}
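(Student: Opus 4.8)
\textbf{Proof proposal for Observation~\ref{obs:walks-1}.}

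The plan is to unwind the definitions and observe that the avoidability condition for walks is essentially vacuous: every extension of a walk is automatically a subwalk of a closed walk, because we are allowed to reuse vertices and edges freely. First I would let $W$ be an arbitrary walk in a graph $G$ and consider an arbitrary $\0$-extension $W' = (v_0, e_0, v_1, e_1, \ldots, e_{k-1}, v_k, e_k, v_{k+1})$ of $W$. The goal is to exhibit a closed walk in $G$ containing $W'$ as a subwalk, which will show that $W'$ is $\0$-closable; since $W'$ was arbitrary, this proves $W$ is $\0$-avoidable.

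The key step is the construction of the closed walk. Given $W'$ as above, I would simply append the reverse of $W'$ to itself: define
$$
C = (v_0, e_0, v_1, \ldots, e_k, v_{k+1}, e_k, v_k, \ldots, e_0, v_0).
$$
This is a walk in $G$ of length $2(k+1)$, since each consecutive pair of vertices is joined by the appropriate edge (reading $W'$ forwards and then backwards, reusing the same edges in reverse order). Its first and last vertices both equal $v_0$, so $C$ is a closed walk. Moreover $W'$ is the initial subwalk of $C$, hence $W'$ is $\0$-closable. (One should note the degenerate case $k = 0$ separately only if one is being fussy, but the construction goes through verbatim; and if one prefers a walk whose length matches no parity constraint, any closed walk containing $W'$ would do — for instance one could also just traverse $W'$ and return along any $v_{k+1}, v_0$-walk, but the reverse-concatenation is the cleanest.)

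I do not anticipate any real obstacle here — this is the ``easy end'' of the five-type hierarchy, precisely because walks impose no distinctness requirements on vertices or edges, so closing up an extension costs nothing. The only thing to be slightly careful about is that the definition of $\0$-closable asks for $W'$ to be a \emph{subwalk} of a closed walk, so I would make sure the notion of subwalk in use matches (a contiguous segment starting and ending at vertices), which the concatenation $C$ clearly satisfies with $W'$ as the prefix. This observation, combined with the remark that every graph with at least one edge contains walks of all non-negative lengths, immediately yields the case $\fbox{$\0$}$ of Corollary~\ref{cor:main} and the walk part of Theorem~\ref{thm:main}.
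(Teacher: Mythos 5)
Your proposal is correct and coincides with the paper's own proof: the paper likewise closes any extension $W'$ by traversing it forward and then backward, making $W'$ a prefix of a closed walk. Nothing further is needed.
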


\begin{proof}
Indeed, any extension $W'$
of a walk
$W$ is a subwalk of the closed walk obtained by traversing $W'$ first
in one direction and then in the opposite one.
\end{proof}

Furthermore, if the graph is connected, then any walk can be shifted to any walk of the same length.

\begin{observation}\label{obs:walks-2}
Let $W$ and $W'$ be two walks of the same length $\ell$ in a connected graph $G$. Then, $W$ can be shifted to $W'$.
\end{observation}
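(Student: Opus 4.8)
The plan is to prove Observation~\ref{obs:walks-2} by reducing the general case to two basic operations on walks: sliding an endpoint along an edge, and ``backtracking'' at an endpoint to change the length locally. First I would observe that since $G$ is connected, any walk of length $\ell$ can be shifted to a walk whose trace is concentrated near a fixed vertex; more precisely, I would show that $W$ can be shifted to a ``pendant'' walk of the form $(u, e, u_1, e, u, e, u_1, \ldots)$ that oscillates along a single edge $e = \{u, u_1\}$, and similarly for $W'$. Since shifting is an equivalence relation (as noted in the text), it then suffices to connect any two such oscillating walks of the same length.

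The key steps, in order, are as follows. (1) Given a walk $W = (v_0, e_1, v_1, \ldots, e_\ell, v_\ell)$, a single shift replaces $W$ by either $(v_0, e_1, \ldots, e_{\ell-1}, v_{\ell-1}, f, w)$ for any edge $f$ incident to $v_{\ell-1}$, or symmetrically modifies the front end. In particular, if $v_{\ell-1}$ has any incident edge at all, we can freely choose the last vertex among its neighbors; iterating, we can ``retract'' the tail of the walk. (2) Using connectedness, pick a spanning tree $T$ of the component of $G$ containing $W$ (note $W$ lies in a single component). Repeatedly shift to move the walk so that eventually all of its vertices coincide with a single vertex $u$ and it oscillates along one tree edge at $u$ --- this is done by peeling off the last vertex and re-attaching it closer to $u$ along $T$, then repeating from the new last vertex, much in the spirit of the retraction argument in the second proof of Theorem~\ref{thm:paths}. (3) Do the same for $W'$, arriving at a canonical oscillating walk; if $W$ and $W'$ land on oscillating walks at different vertices or along different edges, connect these by shifting the oscillation along a path in $T$ between the two vertices. (4) Finally argue that two oscillating walks of the same length $\ell$ rooted at the same vertex along the same edge are equal, or differ by a single ``flip'' reachable by one shift, so they are shift-equivalent.

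A cleaner way to organize steps (2)--(4) is to fix one target vertex $u\in V(G)$ and one incident edge $e$, and prove the single lemma: every walk of length $\ell$ can be shifted to the unique walk of length $\ell$ that starts at $u$ and immediately oscillates along $e$. This is proved by induction on the ``distance'' of the walk's trace from $u$, using connectedness to find, at each step, a vertex of the current walk adjacent to something strictly closer to $u$ in $T$ and shifting the appropriate endpoint toward it. Once this normal form lemma is established, both $W$ and $W'$ shift to the same normal form, and since $\S{G}{~}$ is an equivalence relation we conclude $W \S{G}{~} W'$.

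The main obstacle I anticipate is bookkeeping in the retraction argument: a shift only lets us modify one of the two endpoints of a walk at a time, and each shift changes where both ``ends'' of the sequence are indexed, so one must be careful that pushing one end toward $u$ does not force the other end to wander away uncontrollably. The right fix is to always operate on the endpoint that is currently ``farthest'' (in $T$) from $u$, guaranteeing a strictly decreasing potential $\sum_i d_T(u, v_i)$ (or the maximum such distance, with ties broken by count), so the process terminates; verifying that this potential strictly decreases at each step, and that a legal shift is always available as long as the potential is positive, is the technical heart of the argument. Everything else --- that walks have no injectivity constraints, so every candidate shift is automatically a valid walk --- is immediate and is exactly what makes the walk case so much easier than the trail, path, or induced-path cases.
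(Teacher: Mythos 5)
Your overall strategy---shift both walks to a canonical ``oscillating'' walk at a fixed vertex and invoke transitivity of the shift relation---is workable in principle, but it is a genuinely different and much heavier route than the paper's, and two of your steps are not correct as written. First, replacing only the last vertex of $W$ while keeping $v_0,\dots,v_{\ell-1}$ fixed is not a single shift: by definition two walks of length $\ell$ are shifts of each other only if their union is a walk of length $\ell+1$, so a single shift must drop a vertex at one end \emph{and} append one at the other. Your operation is a composition of two shifts (prepend a neighbor of $v_0$ at the front while dropping $v_\ell$, then drop that new front vertex while appending $w$); this is harmless but should be stated. Second, and more seriously, the termination argument you identify as ``the technical heart'' is left unresolved and fails as stated: the potential $\sum_i d_T(u,v_i)$ cannot strictly decrease ``as long as it is positive,'' because the target oscillating walk itself has potential roughly $\ell/2$, and once one endpoint sits at $u$ every shift is forced to append a vertex at distance at least $1$. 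The variant using the maximum distance also need not decrease at every step, since the maximum may be attained at interior vertices, which no single shift can alter. A correct execution would shift the walk monotonically toward $u$ along $T$ and then continue shifting in the same direction for $\ell$ further steps so that all old vertices fall off the trailing end---but at that point you are reinventing a sliding-window argument.

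Indeed, the paper's proof is exactly that sliding window, applied once and globally: let $W''$ be any walk from the last vertex of $W$ to the first vertex of $W'$ (it exists by connectedness), form the concatenation $W^*$ of $W$, $W''$, and $W'$, and observe that the subwalks of $W^*$ of length $\ell$, read from left to right, form a sequence of consecutive shifts taking $W$ to $W'$. This needs no normal form, no spanning tree, and no potential function. I recommend replacing your normalization scheme with this direct concatenation.
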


\begin{proof}
Let $W^*$ be the concatenation of walks $W$, $W''$, and $W'$, where
$W''$ is an arbitrary walk in $G$ from the last vertex of $W$ to the first vertex of $W'$.
Clearly $W^*$ is also a walk in $G$, and its subwalks of length $\ell$ form a sequence of walks that shows that $W$ can be shifted to $W'$.
\end{proof}

\section{Open problems}\label{sec:open}

We conclude with the following open problems:
\begin{enumerate}
\item The proof of Theorem~\ref{thm:avoidable-induced} is constructive and produces a sequence $S$ of paths shifting a given induced path $P$ in a graph $G$ to an avoidable induced path. Similarly, our proofs of Theorem~\ref{thm:paths} do the same for the case of not necessarily induced paths.
For the latter case, we believe that with an appropriate compact representation of the output and a suitable implementation of Procedure~\ref{alg:shifting-simplepath} one can achieve linear running time.
On the other hand, about the induced case we know much less.
Given a graph $G$ and an induced path $P$ in $G$, is there a polynomial upper bound on the minimum length of a sequence of shifts transforming $P$ to an avoidable induced path and, if so,
can a sequence of polynomial length be computed efficiently?
% in polynomial time?
% What is the time complexity of
% %the problem of
% computing a sequence of shifts transforming an induced path to an avoidable one?
In particular, does the algorithm given by the proof of Theorem~\ref{thm:avoidable-induced} (Procedures~\ref{alg:shifting} and~\ref{alg:refined}) run in polynomial time?

\item For a positive integer $k$, what are the graphs that have an avoidable trail of length $k$ whenever they have a trail of length $k$? What are the graphs for which the above property holds for all~$k$?

For a positive integer $k$, what are the graphs in which every trail of length $k$ can be shifted to an avoidable one? What are the graphs in which every trail can be shifted to an avoidable one?

What is the time complexity of recognizing graphs with above properties?

The above questions are also open for isometric paths.

\item In paper~\cite{MR3992972} the problem of determining whether there exists a sequence of shifts from a given path to another one is proved \textsf{PSPACE}-complete, while the computational complexity status of analogous problems for trails, induced paths, and isometric paths remains open.
The corresponding problem for walks is trivial.

\item Let us say that an induced path $P$ in a graph $G$ is \emph{strongly avoidable} if
there exists a component $C$ of $G-N[P]$ such that
every extension of $P$ can be closed to an induced cycle using only vertices of $C$.
It follows from~\cite[Theorem 5.1]{MR1626534} (see also~\cite{MR3303861}) that every graph $G$ has a strongly avoidable $P_1$. For $k>1$, which graphs have strongly avoidable induced paths $P_k$?

\end{enumerate}

\paragraph{Acknowledgments.}
The authors are grateful to two anonymous reviewers for helpful remarks.
The work for this paper was done in the framework of a bilateral project between Slovenia and Russia, financed by the Slovenian Research Agency (BI-RU/19-20-022). The second  and  third authors acknowledge partial support of the Slovenian Research Agency (I0-0035, research programs P1-0285, P1-0297, and P1-0383, and research projects J1-1692, J1-9110, J1-9187, N1-0102, and N1-0160).
%and the European Commission for funding the InnoRenew CoE project (Grant Agreement \# 739574) under the Horizon 2020 Widespread-Teaming program and the Republic of Slovenia.
The work of the first and fourth authors was done  within the framework of the HSE University Basic Research Program.  The work of the fourth author was partially supported by State Assignment, theme no.~0063-2019-0003.

%\bibliographystyle{abbrv}
%\bibliography{shifting}

\begin{thebibliography}{10}
	
	\bibitem{MR3303861}
	P.~Aboulker, P.~Charbit, N.~Trotignon, and K.~Vu\v{s}kovi\'{c}.
	\newblock Vertex elimination orderings for hereditary graph classes.
	\newblock {\em Discrete Math.}, 338(5):825--834, 2015.
	
	\bibitem{BCGMS19}
	J.~Beisegel, M.~Chudnovsky, V.~Gurvich, M.~Milani{\v{c}}, and M.~Servatius.
	\newblock Avoidable vertices and edges in graphs.
	\newblock In Z.~Friggstad, J.-R. Sack, and M.~R. Salavatipour, editors, {\em
		Algorithms and Data Structures}, pages 126--139, Cham, 2019. Springer
	International Publishing.
	
	\bibitem{MR1626534}
	A.~Berry and J.-P. Bordat.
	\newblock Separability generalizes {D}irac's theorem.
	\newblock {\em Discrete Appl. Math.}, 84(1-3):43--53, 1998.
	
	\bibitem{BDHT19}
	M.~Bonamy, O.~Defrain, M.~Hatzel, and J.~Thiebaut.
	\newblock Avoidable paths in graphs.
	\newblock {\em Electron. J. Combin.}, 27:9, 2020.
	
	\bibitem{MR3122210}
	P.~Bonsma.
	\newblock The complexity of rerouting shortest paths.
	\newblock {\em Theoret. Comput. Sci.}, 510:1--12, 2013.
	
	\bibitem{MR1927566}
	V.~Chv\'{a}tal, I.~Rusu, and R.~Sritharan.
	\newblock Dirac-type characterizations of graphs without long chordless cycles.
	\newblock {\em Discrete Math.}, 256(1-2):445--448, 2002.
	
	\bibitem{MR3992972}
	E.~D. Demaine, D.~Eppstein, A.~Hesterberg, K.~Jain, A.~Lubiw, R.~Uehara, and
	Y.~Uno.
	\newblock Reconfiguring undirected paths.
	\newblock In {\em Algorithms and data structures}, volume 11646 of {\em Lecture
		Notes in Comput. Sci.}, pages 353--365. Springer, Cham, 2019.
	
	\bibitem{IKR08}
	W.~Imrich, S.~Klav\v{z}ar, and D.~F. Rall.
	\newblock {\em Topics in {G}raph {T}heory. Graphs and {T}heir {C}artesian
		{P}roduct}.
	\newblock A K Peters, Ltd., Wellesley, MA, 2008.
	
	\bibitem{OCF1976}
	T.~Ohtsuki, L.~K. Cheung, and T.~Fujisawa.
	\newblock Minimal triangulation of a graph and optimal pivoting order in a
	sparse matrix.
	\newblock {\em J. Math. Anal. Appl.}, 54(3):622--633, 1976.
	
	\bibitem{RTL1976}
	D.~J. Rose, R.~E. Tarjan, and G.~S. Lueker.
	\newblock Algorithmic aspects of vertex elimination on graphs.
	\newblock {\em SIAM J. Comput.}, 5(2):266--283, 1976.
	
	\bibitem{west2001introduction}
	D.~B. West.
	\newblock {\em Introduction to {G}raph {T}heory}.
	\newblock Prentice Hall, Inc., Upper Saddle River, NJ, 1996.
	
\end{thebibliography}

\end{document}